\titleformat{\section}{\large\bfseries}{\thesection}{1em}{}
\titleformat{\appendix}{\large\bfseries}{\thesection}{1em}{}
\numberwithin{equation}{section}
\definecolor{gray}{gray}{0.6}
\newtheorem{assumption}{Assumption}
\newtheorem{lemma}{Lemma}
\newcommand{\vertiii}[1]{{\left\vert\kern-0.25ex\left\vert\kern-0.25ex\left\vert #1
    \right\vert\kern-0.25ex\right\vert\kern-0.25ex\right\vert}}
\begin{document}

\title{Sequential Local Mesh Refinement Solver with Separate Temporal and Spatial Adaptivity for Non-linear Two-phase Flow Problems}

\author{Hanyu Li \quad Wing Tat Leung \quad Mary F. Wheeler}

\date{\today}
\maketitle

\begin{abstract}
%% Text of abstract
Convergence failure and slow convergence rates are among the biggest challenges with solving the system of non-linear equations numerically. Although mitigated, such issues still linger when using strictly small time steps and unconditionally stable fully implicit schemes. The price that comes with restricting time steps to small scales is the enormous computational load, especially in large-scale models. To address this problem, we introduce a sequential local mesh refinement framework to optimize convergence rate and prevent convergence failure, while not restricting the whole system to small time steps, thus improving computational efficiency. We test the algorithm with the non-linear two-phase flow model. Starting by solving the problem on the coarsest space-time mesh, the algorithm refines the domain in time at water saturation front to prevent convergence failure. It then deploys fine spatial grid in regions with large saturation variation to assure solution accuracy.  After each refinement, the solution from the previous mesh is used to estimate the initial guess of unknowns on the current mesh for faster convergence. Numerical results are presented to confirm accuracy of our algorithm as compared to the uniformly fine time step and fine spatial discretization solution. We observe approximately 25 times speedup in the solution time by using our algorithm.
\end{abstract}
\begin{keyword}
%% keywords here, in the form: keyword \sep keyword

%% PACS codes here, in the form: \PACS code \sep code

%% MSC codes here, in the form: \MSC code \sep code
%% or \MSC[2008] code \sep code (2000 is the default)
Space-time domain decomposition, Mixed finite element method, Sequential local mesh refinement, Iterative solver, Non-linear system
\end{keyword}

%% main text
\newcommand{\bs}[1]{\boldsymbol{#1}}
\section{Introduction}
\label{sec:int}

\par Complex multi-phase flow and reactive transport in subsurface porous media is mathematically modeled by a system of non-linear equations. Due to the significant non-linearity, solving such system with Newton's method usually suffers from convergence issues even when applying strictly small time steps and using unconditionally stable fully implicit schemes. This problem becomes much more severe in large-scale models. The enormous number of unknowns makes each Newton iteration computationally exhaustive. Therefore by reducing the size of the model using multiscale techniques and optimizing the convergence rate of Newton's method, we can achieve orders of magnitude greater computational efficiency.
\par Adaptive homogenization \cite{Amanbek:0619, Singh:1118} reduces the number of unknowns in the model by replacing fine mesh with coarse mesh in regions where non-linearity and variable (eg. saturation) variation is negligible. However, fine and coarse discretization in space requires different time scales for stable numerical solution. Forcing the coarse mesh to accommodate the fine mesh by taking fine time steps fails to reduce the number of unknowns in time. Space-time domain decomposition addresses this issue by allowing different time scales for different spatial grid. Several space-time domain decomposition approaches have been proposed in the past. \cite{Hughes:0288, Hulbert:1290} introduced space-time finite element method for elastodynamics with discontinuous Galerkin (DG) in time. The method has also been applied to other types of problems such as diffusion with different time discretization schemes \cite{Bause:1215, Bause:0617, Kocher:15, Kocher:1114}.
\par The aforementioned literatures applied space-time domain decomposition method to mostly mechanics problems. On the other hand, prior work regarding flow mainly focused on linear single phase flow and transport problems where flow is naturally decoupled from the advection-diffusion component transport \cite{Hoang:1213, Hoang:0717}. \cite{Singh:0818} first validates the space-time approach for non-linear coupled multiphase flow and transport problems on static grid. \cite{Singh:0818} enforces strong continuity of fluxes at non-matching space-time interfaces with enhanced velocity introduced in \cite{Thomas:0911, Wheeler:0102, Amanbek:18, Amanbek:0119}. It also constructs and solves a monolithic system to avoid computational overheads associated with iterative solution schemes introduced in \cite{Hoang:1213}, that require subdomain problems to be solved iteratively until weak continuity of fluxes is satisfied at interfaces. \cite{Singh:0918} upgrades the method by allowing adaptive mesh refinement, thus improving computational efficiency while maintaining accuracy as compared to the uniformly fine scale solution. The upgraded algorithm uses initial residual as a cheap error estimator to search for regions that need refinement. A number of other error estimators are mentioned in \cite{Pencheva:0213, Vohralik:1013}. As shown in Fig.\ref{fig:satres}, the normalized non-linear residual becomes the largest in regions with the highest non-linearity (water saturation front) and thus consumes most computational resources. Refining such regions in time ensures Newton convergence while refining in space maintains solution accuracy.
\begin{figure}[t]
  \center{\includegraphics[width=\textwidth]{./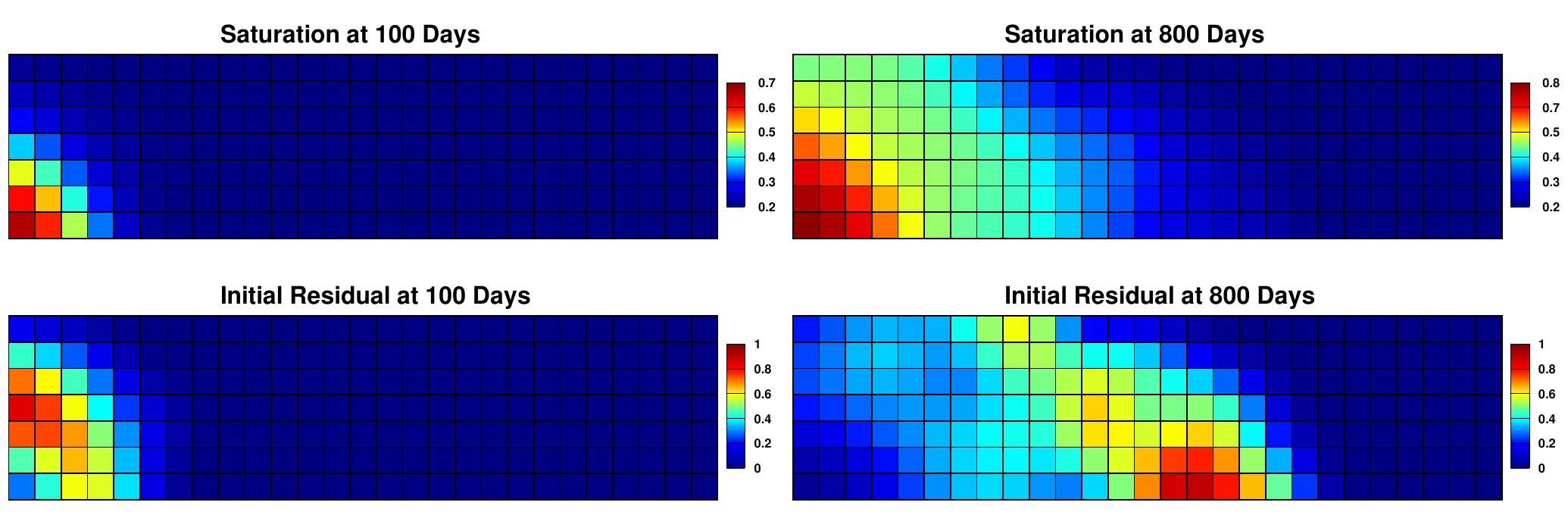}}
  \setlength{\abovecaptionskip}{-10pt}
  \caption{\bf{Saturation and normalized initial non-linear residual at 100 and 800 days}}
  \label{fig:satres}
\end{figure}
\par The adaptive local mesh refinement approach demonstrated by \cite{Singh:0918} allows only one level of refinement in both space and time, thus restricting the largest coarse time step allowed for stable numerical convergence. \cite{Li:0119} extended such approach by allowing more refinement levels, similar to the algorithm introduced in \cite{Chueh:1010}, which however only considers spatial adaptivity. When solving problems on each coarse space-time domain, regions with large non-linear residual and saturation variation are sequentially refined to the finest resolution to ensure solution convergence and accuracy. After each refinement, before solving the problem on the new mesh, the initial guess for the unknowns are populated by the solution on the previous mesh using linear projection. The initial guess provided in such manner is naturally closer to the true solution. Therefore, the non-linear solver convergence is not only guaranteed, but also accelerated. Although achieving 5 times speedup on solution time with iterative linear solver, \cite{Li:0119} relied on isotropic space-time refinement which produces a significant number of unnecessary elements. Regions with large saturation variation behind the front are forced to take redundant fine time steps. Preventing such over-refinement will further improve computational performance. Another problem associated with isotropic refinement scheme is that, the error indicator used to pinpoint refinement location combines both temporal and spatial saturation variations. Error indicator calculated in this fashion sometimes misleads the refinement process, especially in channelized permeability field, thus damaging the solution accuracy and hindering numerical convergence. In this work, we extend the method demonstrated in \cite{Li:0119} by separating temporal and spatial adaptivity to further improve computational performance and solution accuracy.
\par In this paper, we restrict ourselves to non-linear two-phase flow problems in subsurface porous media. We intend to approach more complicated non-linear models such as three-phase black oil model in the near future. The rest of the paper begins by describing the governing equations for two phase flow, the functional spaces for space-time domain decomposition, the enhanced velocity weak variational formulation and its fully discrete form in Section \ref{sec:math}. Then we will discuss the error estimator analysis used for the refinement process in Section \ref{sec:err}. Following the math ground work, we present the algorithm for the sequential local mesh refinement solver in Section \ref{sec:sol}, including the mechanism to search for refinement regions and separately adapt the mesh in temporal and spatial dimensions. Afterwards, we demonstrate results from two numerical experiments using the proposed algorithm in Section \ref{sec:num}.

\section{Two phase flow formulation}
\label{sec:math}

\par We consider the following well-known two-phase, slightly compressible flow in porous medium model, with oil and water phase mass conservation, constitutive equations, boundary and initial conditions.
\begin{equation}
  \frac{\partial (\phi\rho_{\alpha}s_{\alpha})}{\partial t}+\nabla\cdot\bs{u}_{\alpha}=q_{\alpha} \quad in\ \Omega\, \times\, J
  \label{eq:mb}
\end{equation}
\begin{equation}
  \bs{u}_{\alpha}=-K\rho_{\alpha}\frac{k_{r\alpha}}{\mu_{\alpha}}(\nabla p_{\alpha}-\rho_{\alpha}\bs{g}) \quad in\ \Omega\, \times\, J
  \label{eq:da}
\end{equation}
\begin{equation}
  \bs{u}_{\alpha}\cdot\bs{\nu}=0 \quad on\ \partial\Omega\, \times\, J
  \label{eq:bc}
\end{equation}
\begin{equation}
  \begin{cases}
  \begin{aligned}
    p_{\alpha}=p_{\alpha}^{0} \\
    s_{\alpha}=s_{\alpha}^{0}
  \end{aligned}
  \end{cases}
  at\ \Omega\, \times\, \{t=0\}
  \label{eq:ic}
\end{equation}
$\phi$ is porosity and $K$ is permeability tensor. $\rho_{\alpha}$, $s_{\alpha}$, $\bs{u}_{\alpha}$ and $q_{\alpha}$ are density, saturation, velocity and source/sink, respectively for each phase. The phases are slightly compressible and the phase densities are calculated by Eqn.\eqref{eq:comp},
\begin{equation}
  \rho_{\alpha}=\rho_{\alpha,ref}\cdot e^{c_{f,\alpha}(p_{\alpha}-p_{\alpha,ref})}
  \label{eq:comp}
\end{equation}
with $c_{f,\alpha}$ being the fluid compressibility and $\rho_{\alpha,ref}$ being the reference density at reference pressure $p_{\alpha,ref}$. In the constitutive equation \eqref{eq:da} given by Darcy's law, $k_{r\alpha}$, $\mu_{\alpha}$ and $p_{\alpha}$ are the relative permeability, viscosity and pressure for each phase. Relative permeability is a function of saturation. Pressure differs between wetting phase and non-wetting phase in the presence of capillary pressure, which is also a function of saturation.
\begin{equation}
  k_{r\alpha}=f(s_{\alpha})
  \label{eq:relperm}
\end{equation}
\begin{equation}
  p_{c}=g(s_{\alpha})=p_{nw}-p_{w}
  \label{eq:cap}
\end{equation}
The saturation of all phases obeys the constrain \eqref{eq:satcon}.
\begin{equation}
  \sum_{\alpha} s_{\alpha}=1
  \label{eq:satcon}
\end{equation}
The boundary and initial conditions are given by Eqn.\eqref{eq:bc} and \eqref{eq:ic}. $J=(0,T]$ is the time domain of interest while $\Omega$ is the spatial domain.
\par Now we will give a brief introduction of enhanced velocity formulation in space-time domain. Let $J=(0,T]$ be partitioned in to a number of coarse time intervals $\{t_{n}\}_{n=1}^{N}$ where $0=t_{1}<t_{2}<\cdots<t_{N}=T$. $J_{n}=(t_{n},t_{n+1}]$ is the nth partition of the time domain of interest. Consider $J_{n} \times \Omega$ as an union of some non-overlapping subdomains $\big\{I_{m} \times \Omega_{i}\big\}$, namely $J_{n} \times \Omega=\cup_{i,m}\big(I_{m} \times \Omega_{i}\big)$, where $I_{m}=(\tau_{m},\tau_{m+1}]$ is a sub-interval of $J_{n}=(t_{n},t_{n+1}]$ and $\Omega_{i}$ is a subdomain of $\Omega$. The interfaces of the subdomains are defined as $\Gamma_{i,j}^{m,s}=\partial \Big( I_{m}\times\Omega_{i}\Big)\cap \partial \Big( I_{s}\times\Omega_{j}\Big)$, $\Gamma=\cup_{m,s,i,j}\Gamma_{i,j}^{m,s}$ and $\Gamma_{i}^{m}= \Gamma\cap \partial \Big( I_{m}\times\Omega_{i}\Big)$. We use space-time enhanced velocity method similar as \cite{Wheeler:0102} to discretize the system. The functional spaces for mixed weak formulation are
\begin{equation*}
  \bs{V}=H(div;\Omega)=\bigg\{\bs{v}\in\big(L^2(\Omega)\big)^d:\nabla\cdot\bs{v}\in L^2(\Omega)\bigg\},\quad W=L^2(\Omega),
\end{equation*}
with finite dimensional subspaces as $\bs{V}_{h}$ and $W_{h}$. Let $\mathcal{T}_{h,i}^{n,m}$ be a rectangular partition of $I_{m} \times \Omega_{i}$, $E=T \times F$ be an space-time element in such partition and $\mathcal{T}_{h}^{n}=\cup_{i,m}\mathcal{T}_{h,i}^{n,m}$. Define velocity and pressure/saturation spaces as
\begin{equation*}
  \bs{V}_{h,i}^{n,m}=\Bigg\{\bs{v}\in L^2\Big(I_{m};H(div;\Omega_{i})\Big):\bs{v}(\cdot,\bs{x})\Big|_{F} \in \bs{V}_{h}(F)\ and\ \bs{v}(t,\cdot)\Big|_{T}=\displaystyle \sum_{a=1}^{l} \bs{v}_{a}t^{a}\ with\ \bs{v}_{a}\in \bs{V}_{h}(F), \forall\ E \in \mathcal{T}_{h,i}^{n,m}\Bigg\},
\end{equation*}
\begin{equation*}
  W_{h,i}^{n,m}=\Bigg\{w\in L^2\Big(I_{m};L^2(\Omega_{i})\Big):w(\cdot,\bs{x})\Big|_{F} \in W_{h}(F)\ and\ w(t,\cdot)\Big|_{T}=\displaystyle \sum_{a=1}^{l} w_{a}t^{a}\ with\ w_{a}\in W_{h}(F), \forall\ E \in \mathcal{T}_{h,i}^{n,m}\Bigg\}.
\end{equation*}
where
\begin{equation*}
  \bs{V}_h(F)= \bigg\{\bs{v}=(v_1,v_2)\text{ or }\bs{v}=(v_1,v_2,v_3):v_l=\alpha_l+\beta_lx_l;\;\alpha_l,\beta_l\in \mathbb{R},l=1,\cdots,d\bigg\}
\end{equation*}
\begin{equation*}
  W_h(F)= \bigg\{w\text{ is a constant in }F\bigg\}
\end{equation*}
Functions in $\bs{V}_{h,i}^{n,m}$ and $W_{h,i}^{n,m}$ along time dimension are represented by polynomials with degrees up to $l$. As described in \cite{Singh:0818}, following the discontinuous Galerkin (DG) discretization in time \cite{Arbogast:0215, Johnson:1987}, the $DG_{0}$ (polynomial of degree zero) scheme makes $\bs{v}(t,\cdot)\big|_{T}$ and $w(t,\cdot)\big|_{T}$ constant. Then we define the product spaces as $\bs{V}_{h}^{n}= \oplus_{i,m}\bs{V}_{h,i}^{n,m}$. We remark that $\bs{V}_{h}^{n}$ is not a subspace of $\bs{V}$. To obtain a finite element space containing basis functions with continuous normal flux, we need to modify the basis functions on the space-time interface $\Gamma_{i,j}^{m,s}$. Let $\mathcal{E}_{h,i,j}^{n,m,s}$ be the rectangular partition of $\Gamma_{i,j}^{m,s}$ obtained from the intersection of the traces of $\mathcal{T}_{h,i}^{n,m}$ and $\mathcal{T}_{h,j}^{n,s}$. For each $e\in \mathcal{E}_{h,i,j}^{n,m,s}$, we define a $RT_0$ basis function $\bs{v}_{e}$ with a normal component equal to one on $e$ , namely $\bs{v}_{e}\big|_e\cdot \bs{\nu}=1$. We then define the space $\bs{V}^{\Gamma}_h$ to be the  span of all these basis function, $\bs{v}_e$. Then the space-time mixed finite element velocity space $\bs{V}^{n,*}_h$ is
\begin{equation*}
  \bs{V}^{n,*}_h= \Big(\oplus_{i,m}\bs{V}_{h,i}^{n,m,0}\Big)\oplus \bs{V}^{\Gamma}_h
\end{equation*}
where $\bs{V}^{n,m,0}_{h,i}$ is the subspace of $\bs{V}_{h,i}^{n,m}$ with zero normal component on $\Gamma_{i}^{m}$. Similarly, the pressure/saturation space is $W_{h}^n=\oplus_{i,m}W_{h,i}^{n,m}$.
\par Now consider any function $f$ piecewise in time (eg. functions in $\bs{V}_{h}^{n,*}$ and $W_{h}^{n}$), define $f_{\tau}$ as the linear interpolation along time direction as
\begin{equation*}
  \displaystyle f_{\tau}(t,\cdot)\Big|_{(\tau_{m},\tau_{m+1}]\times\Omega_{i}}=\frac{t-\tau_{m}}{\tau_{m+1}-\tau_{m}}f(\tau_{m+1}^{-},\cdot)\Big|_{\Omega_{i}}+\frac{\tau_{m+1}-t}{\tau_{m+1}-\tau_{m}}f(\tau_{m}^{-},\cdot)\Big|_{\Omega_{i}},
\end{equation*}
and we have
\begin{equation*}
  \displaystyle \int_{\tau_{m}}^{\tau_{m+1}}\partial_{t}f_{\tau}(t,\cdot)=f(\tau_{m+1},\cdot)-f(\tau_{m},\cdot)
\end{equation*}
To simplify the notation, let $C_{\alpha,h}^{n}=\rho_{\alpha}\Big(p_{\alpha,h}^{n}\Big)s_{\alpha,h}^{n}$ be the phase mass concentration. Then space-time enhanced velocity method formulates Eqn.\eqref{eq:mb} and \eqref{eq:da} as: find $\bs{u}_{\alpha,h}^{n}\in \bs{V}_{h}^{n,*}$, $\tilde{\bs{u}}_{\alpha,h}^{n}\in \bs{V}_{h}^{n,*}$, $s_{\alpha,h}^{n}\in W_{h}^{n}$, $p_{\alpha,h}^{n}\in W_{h}^{n}$ such that
\begin{equation}
  \int_{J_{n}}\int_{\Omega}\partial_{t}\Big(\phi C_{\alpha,h,\tau}^{n}\Big)w+\int_{J_{n}}\int_{\Omega}\Big(\nabla\cdot\bs{u}_{up,\alpha,h}^{n}\Big)w=\int_{J_{n}}\int_{\Omega}q_{\alpha}w \quad \forall w \in W_{h}^{n}
  \label{eq:evmb}
\end{equation}
\begin{equation}
  \int_{J_{n}}\int_{\Omega}K^{-1}\tilde{\bs{u}}_{\alpha,h}^{n}\cdot\bs{v}=\int_{J_{n}}\int_{\Omega}p_{\alpha,h}^{n}\nabla\cdot\bs{v} \quad \forall \bs{v} \in \bs{V}_{h}^{n,*}
  \label{eq:evda}
\end{equation}
\begin{equation}
  \int_{J_{n}}\int_{\Omega}\bs{u}_{\alpha,h}^{n}\cdot\bs{v}=\int_{J_{n}}\int_{\Omega}\lambda_{\alpha}\tilde{\bs{u}}_{\alpha,h}^{n}\cdot\bs{v} \quad \forall \bs{v} \in \bs{V}_{h}^{n,*}
  \label{eq:aux}
\end{equation}
The mobility ratio in \eqref{eq:aux} is defined as
\begin{equation}
  \lambda_{\alpha}=\frac{k_{r\alpha}\rho_{\alpha}}{\mu_{\alpha}}
  \label{eq:mob}
\end{equation}
and $\bs{u}_{up,\alpha,h}^{n}$ is the upwind velocity calculated by
\begin{equation}
  \int_{J_{n}}\int_{\Omega}\bs{u}_{up,\alpha,h}^{n}\cdot\bs{v}=\int_{J_{n}}\int_{\Omega}\lambda_{\alpha}^{*}\tilde{\bs{u}}_{\alpha,h}^{n}\cdot\bs{v} \quad \forall \bs{v} \in \bs{V}_{h}^{n,*}
  \label{eq:upvel}
\end{equation}
The additional auxiliary phase fluxes $\tilde{\bs{u}}_{\alpha,h}^{n}$ is used to avoid inverting zero phase relative permeability \cite{Peszy:0306}. Calculation of the upwind mobility ratio $\lambda_{\alpha}^{*}$ is referred to Eqn.\eqref{eq:uw}. The variational form for specifically oil-water system and its fully discrete formulation is attached in \ref{sec:ful}. The discrete formulation provides us a non-linear system of equations for pressure and saturation. We approximate such system in linear form and use Newton iteration to approach the true solution. Depending on the level of non-linearity and the closeness between the initial guess and the true solution, Newton's method could take numerous iterations before achieving convergence. We will use the sequential local mesh refinement algorithm to accelerate the Newton convergence. Before introducing refinement algorithm, in the next section, we will first present analysis for error estimator used for searching refinement regions.

\section{A Posteriori Error Estimate}
\label{sec:err}

\par In this section, we discuss the error estimate analysis as an extension to the work presented in \cite{Vohralik:1013}. In contrast to the previous work, our approach to calculate a posteriori error estimate does not rely on computationally expensive local reconstruction of fine scale solution from coarse scale solution. In this section, let $E_{i}^{m}=(\tau_{m},\tau_{m+1}] \times \Omega_{i} \in \mathcal{T}_{h}^n$ be a space-time element, we define local error estimators $\eta_{t,r,\alpha,i}^{n,m}, \eta_{s,r,\alpha,i}^{n,m}, \eta_{t,f,\alpha,i}^{n,m}, \eta_{s,f,\alpha,i}^{n,m}$, $\eta_{t,p,\alpha,i}^{n,m}$, $\eta_{s,p,\alpha,i}^{n,m}$ as follow
\begin{equation}
  \eta_{t,r,\alpha,i}^{n,m}=\big|\tau_{m+1}-\tau_{m}\big|\Bigg(\int_{E_{i}^{m}}\Big|\partial_{t}\Big(\phi C_{\alpha,h,\tau}^{n}\Big)+\nabla\cdot \bs{u}_{up,\alpha,h,\tau}^{n}-q_{\alpha}\Big|^{2}\Bigg)^{\frac{1}{2}}
  \label{eq:tres}
\end{equation}
\begin{equation}
  \eta_{s,r,\alpha,i}^{n,m}=\big|\Omega_{i}\big|\Bigg(\int_{E_{i}^{m}}\Big|\partial_{t}\Big(\phi C_{\alpha,h,\tau}^{n}\Big)+\nabla\cdot \bs{u}_{up,\alpha,h,\tau}^{n}-q_{\alpha}\Big|^{2}\Bigg)^{\frac{1}{2}}
  \label{eq:sres}
\end{equation}
\begin{equation}
  \eta_{t,f,\alpha,i}^{n,m}=\Bigg(\int_{E_{i}^{m}}K^{-1}\Big|\bs{u}_{\alpha,h}^{n}-\bs{u}_{\alpha,h,\tau}^{n}\Big|^{2}\Bigg)^{\frac{1}{2}}
  \label{eq:tflux}
\end{equation}
\begin{equation}
  \eta_{s,f,\alpha,i}^{n,m}=\Bigg(\int_{E_{i}^{m}}K^{-1}\Big|\bs{u}_{up,\alpha,h}^{n}-\bs{u}_{\alpha,h}^{n}\Big|^{2}\Bigg)^{\frac{1}{2}}
  \label{eq:sflux}
\end{equation}
\begin{equation}
  \eta_{t,p,\alpha,i}^{n,m}=\Bigg(\int_{E_{i}^{m}}K^{-1}\Big|\tilde{\bs{u}}_{\alpha,h}^{n}-\tilde{\bs{u}}_{\alpha,h,\tau}^{n}\Big|^{2}\Bigg)^{\frac{1}{2}}
  \label{eq:tnoncon}
\end{equation}
\begin{equation}
  \eta_{s,p,\alpha,i}^{n,m}=\Bigg(\big|\Omega_{i}\big|^{2}\int_{E_{i}^{m}}\Big|\nabla\times\Big(K^{-1}\tilde{\bs{u}}_{\alpha,h}^{n}\Big)\Big|^{2}+\sum_{e\subset\overline{E_{i}^{m}}}|e|\int_{e}\Big[\Big(K^{-1}\tilde{\bs{u}}_{\alpha,h}^{n}\Big)\times \bs{n}_{e}\Big]^{2}\Bigg)^{\frac{1}{2}}
  \label{eq:snoncon}
\end{equation}
Eqn.\eqref{eq:tres}-\eqref{eq:sres} are residual estimators, Eqn.\eqref{eq:tflux}-\eqref{eq:sflux} are flux estimators and Eqn.\eqref{eq:tnoncon}-\eqref{eq:snoncon} are nonconformity estimators. Eqn.\eqref{eq:tres} through \eqref{eq:snoncon} will provide upper bound for the error measure we are about to introduce. It is common to use energy norm as an error measurement for linear problems. However, it is much more complicated for nonlinear problems. Instead we use the dual norm of the residual, which is also widely applied, as our error measure. We denote
\begin{equation*}
  X^{n}=L^{2}\bigg(J_n;H^{1}(\Omega)\bigg) \bigcap H^{1}\bigg(J_n;L^{2}(\Omega)\bigg),
\end{equation*}
and for any $\psi \in X^{n}$
\begin{equation*}
  \big\|\psi\big\|_{X^{n}}=\int_{J_n} \sum_{\Omega_{i} \in \Omega}\bigg(\big\|\psi\big\|_{L^{2}(\Omega_{i})}^{2}+\big\|K^{\frac{1}{2}}\nabla\psi\big\|_{L^{2}(\Omega_{i})}^{2}+\big\|\psi_{t}\big\|_{L^{2}(\Omega_{i})}^{2}\bigg).
\end{equation*}
Let $s_{\alpha}^{n}$, $p_{\alpha}^{n}$, $\bs{u}_{\alpha}^{n}$ and $s_{\alpha,h}^{n}$, $p_{\alpha,h}^{n}$, $\bs{u}_{\alpha,h}^{n}$ be the exact and numerical saturation, pressure and velocities solutions. The error measure $\vertiii{\cdot}$ is defined as
\begin{equation*}
  \vertiii{(s_{\alpha}^{n}-s_{\alpha,h}^{n},p_{\alpha}^{n}-p_{\alpha,h}^{n},\tilde{\bs{u}}_{\alpha}^{n}-\tilde{\bs{u}}_{\alpha,h}^{n})} \mathrel{\mathop:}= N^n_{\alpha}+N^n_{\alpha,p}
\end{equation*}
where
\begin{equation*}
  N_{\alpha}^{n}=\sup_{\psi \in X^{n}, \|\psi\|=1} \Bigg\{\int_{J_n}\bigg(\int_{\Omega}\partial_{t}\Big(\phi C_{\alpha,h}^{n}-\phi C_{\alpha,h,\tau}^{n}\Big)\psi-\int_{\Omega}\Big(\bs{u}_{\alpha,h}^{n}-\bs{u}_{\alpha,h,\tau}^{n}\Big)\cdot\nabla\psi\bigg)\Bigg\}.
\end{equation*}
and
\begin{equation*}
  N_{\alpha,p}^{n}=\inf_{\psi\in X^{n}}\Bigg\{\int_{J_n}\int_{\Omega}K^{-1}\Big(\tilde{\bs{u}}_{\alpha,h,\tau}^{n}+K\nabla\psi\Big)^{2}\Bigg\}^{\frac{1}{2}}
\end{equation*}
$N^n_{\alpha}$ represents the dual norm of the residual and $N^n_{\alpha,p}$ measures the non-nonconformity of the numerical solutions.
\begin{assumption}
\label{ass:assumption}
  We assume there exist a subspace $M_{h}^{n}\subset L^{2}\Big(J_n;H(curl,\Omega)\Big)$ such that
  \begin{equation*}
    \nabla\times M_{h}^{n}\subset \bs{V}^{n,*}_h\cap L^{2}\Big(J_n;H(div,\Omega)\Big).
  \end{equation*}
  Moreover, for all $v \in H(curl,\Omega)$, there exist $\Pi_{M^n_h}(v)\in M_{h}^{n}$ such that
  \begin{equation*}
    \begin{aligned}
      \big\|v-\Pi_{M^n_h}(v)\big\|_{L^{2}(E_{i}^{m})} & \leq C\ \big|\Omega_i\big|\ \big\|\nabla v\big\|_{L^{2}(E_{i}^{m})} \quad \forall E_i^m \in \mathcal{T}_h^n\\
      \big\|v-\Pi_{M^n_h}(v)\big\|_{L^{2}(e)}^{2} & \leq C\ |e|\ \|v\|_{H^{1/2}(e)}^{2} \quad \forall e \in E_{i}^m
    \end{aligned}
  \end{equation*}
  where $e$ is an edge of $E_i^m$ and $\Pi$ is the $L^2$ projection operator.
\end{assumption}
\par Next, we will present a posteriori error estimate for this error measure. In the following lemma, we estimate the dual norm of the residual of the mass balance equations.
\begin{lemma}
  Let $\eta^{n,m}_{t,r,\alpha,i},\eta^{n,m}_{s,r,\alpha,i}, \eta^{n,m}_{t,f,\alpha,i}$ and $\eta^{n,m}_{s,f,\alpha,i}$ be the error indicators defined in Eqn.\eqref{eq:tres}-\eqref{eq:sflux}. There exist constants $C, C_{poin}>0$ such that
  \begin{equation*}
    N_{\alpha}^{n} \leq \Bigg(\sum_{E_{i}^{m} \in \mathcal{T}_{h}^{n}}\Big(\eta_{t,f,\alpha,i}^{n,m}\Big)^{2}\Bigg)^{\frac{1}{2}}+\Bigg(\sum_{E_{i}^{m} \in \mathcal{T}_{h}^{n}}\Big(\eta_{s,f,\alpha,i}^{n,m}\Big)^{2}\Bigg)^{\frac{1}{2}}+C C_{poin}\Bigg\{\Bigg(\sum_{E_{i}^{m} \in \mathcal{T}_{h}^{n}}\Big(\eta_{t,r,\alpha,i}^{n,m}\Big)^{2}\Bigg)^{\frac{1}{2}}+\Bigg(\sum_{E_{i}^{m} \in \mathcal{T}_{h}^{n}}\Big(\eta_{s,r,\alpha,i}^{n,m}\Big)^{2}\Bigg)^{\frac{1}{2}}\Bigg\}
  \end{equation*}
\end{lemma}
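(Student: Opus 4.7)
The plan is to bound the defining supremum of $N_{\alpha}^{n}$ by testing against an arbitrary $\psi\in X^{n}$ with $\|\psi\|_{X^{n}}=1$ and splitting the integrand of $N_{\alpha}^{n}$ into a pure flux-difference piece and a mass-balance piece; the former are controlled directly by Cauchy--Schwarz and give the two flux estimators, while the latter is converted via the discrete equation \eqref{eq:evmb} into a residual pairing and handled by local Poincar\'e inequalities.

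First I would bound $-\int_{J_{n}}\int_{\Omega}(\bs{u}_{\alpha,h}^{n}-\bs{u}_{\alpha,h,\tau}^{n})\cdot\nabla\psi$ directly. Writing the integrand as $K^{-1/2}(\bs{u}_{\alpha,h}^{n}-\bs{u}_{\alpha,h,\tau}^{n})\cdot K^{1/2}\nabla\psi$ and applying Cauchy--Schwarz element-wise on each $E_{i}^{m}\in\mathcal{T}_{h}^{n}$ gives exactly $\eta_{t,f,\alpha,i}^{n,m}\,\|K^{1/2}\nabla\psi\|_{L^{2}(E_{i}^{m})}$; a second discrete Cauchy--Schwarz across elements together with $\|\psi\|_{X^{n}}=1$ produces the first term of the claimed bound.

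For the mass-balance piece $\int_{J_{n}}\int_{\Omega}\partial_{t}(\phi(C_{\alpha,h}^{n}-C_{\alpha,h,\tau}^{n}))\psi$, I would integrate by parts in space (the boundary term vanishes by \eqref{eq:bc}) to re-express it in divergence form, then insert the upwind velocity through the splitting $\bs{u}_{\alpha,h}^{n}=\bs{u}_{up,\alpha,h}^{n}+(\bs{u}_{\alpha,h}^{n}-\bs{u}_{up,\alpha,h}^{n})$. The extra difference $\bs{u}_{\alpha,h}^{n}-\bs{u}_{up,\alpha,h}^{n}$ is controlled by the same element-wise Cauchy--Schwarz as in the previous step and produces the $\eta_{s,f}$ contribution. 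What remains can be rewritten as the discrete residual $\partial_{t}(\phi C_{\alpha,h,\tau}^{n})+\nabla\cdot\bs{u}_{up,\alpha,h,\tau}^{n}-q_{\alpha}$ paired against $\psi$, modulo a mismatch between $\bs{u}_{up,\alpha,h}^{n}$ and $\bs{u}_{up,\alpha,h,\tau}^{n}$ that one has to absorb into the other estimators.

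To close, I would use \eqref{eq:evmb} as a Galerkin orthogonality: taking $w=\psi_{h}$, the $L^{2}$-projection of $\psi$ onto $W_{h}^{n}$, lets me replace $\psi$ by $\psi-\psi_{h}$ in the residual pairing. Invoking the two complementary local Poincar\'e inequalities on each $E_{i}^{m}$, namely $\|\psi-\psi_{h}\|_{L^{2}(E_{i}^{m})}\le C_{poin}|\Omega_{i}|\,\|\nabla\psi\|_{L^{2}(E_{i}^{m})}$ and $\|\psi-\psi_{h}\|_{L^{2}(E_{i}^{m})}\le C_{poin}|\tau_{m+1}-\tau_{m}|\,\|\partial_{t}\psi\|_{L^{2}(E_{i}^{m})}$, I would split the residual pairing into two halves and apply the spatial bound to one and the temporal bound to the other. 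A final Cauchy--Schwarz over $\mathcal{T}_{h}^{n}$ together with ellipticity of $K$ then produces the weighted sums $(\sum(\eta_{t,r,\alpha,i}^{n,m})^{2})^{1/2}$ and $(\sum(\eta_{s,r,\alpha,i}^{n,m})^{2})^{1/2}$ with the combined constant $CC_{poin}$. The principal obstacle is the careful bookkeeping around the time-interpolation mismatch between $\bs{u}_{up,\alpha,h}^{n}$ (which appears in the Galerkin orthogonality) and $\bs{u}_{up,\alpha,h,\tau}^{n}$ (which appears in the residual estimators), together with the distributional nature of $\partial_{t}(\phi C_{\alpha,h}^{n})$ at the time-grid nodes; the identity $\int_{\tau_{m}}^{\tau_{m+1}}\partial_{t}f_{\tau}=f(\tau_{m+1}^{-})-f(\tau_{m}^{-})$ and the piecewise-constant-in-time structure of $W_{h}^{n}$ should make these discrepancies vanish when tested against $\psi_{h}$, but verifying this cleanly is the technical heart of the argument.
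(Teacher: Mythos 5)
Your toolkit is the right one and matches the paper's: element-wise Cauchy--Schwarz for the two flux-difference terms (producing $\eta_{t,f,\alpha,i}^{n,m}$ and $\eta_{s,f,\alpha,i}^{n,m}$, the paper's $I_3$ and $I_2$), Galerkin orthogonality from \eqref{eq:evmb} with $w=\Pi_{W_{h}^{n}}\psi$, and local Poincar\'e inequalities in space and time, followed by a discrete Cauchy--Schwarz over $\mathcal{T}_h^n$. The gap is at the very first step, exactly the spot you call the technical heart. The paper never manipulates the accumulation term $\int_{J_n}\int_{\Omega}\partial_t\big(\phi C_{\alpha,h}^{n}-\phi C_{\alpha,h,\tau}^{n}\big)\psi$ on its own: it first invokes the \emph{exact} weak mass balance (Eqn.~\eqref{eq:mb} together with the no-flow condition \eqref{eq:bc}), reading the first slot of the error functional with the exact concentration and velocity (see Eqn.~\eqref{eq:lem1_dualn}), so that the whole quantity becomes the computable residual functional $\int_{J_n}\int_{\Omega}\big(q_{\alpha}\psi-\partial_t(\phi C_{\alpha,h,\tau}^{n})\psi+\bs{u}_{\alpha,h,\tau}^{n}\cdot\nabla\psi\big)$; this is the only place where $q_{\alpha}$ and the divergence structure enter. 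Your proposal never uses the exact equation. Instead you propose to ``integrate by parts in space'' the term $\int_{J_n}\int_{\Omega}\partial_t\big(\phi(C_{\alpha,h}^{n}-C_{\alpha,h,\tau}^{n})\big)\psi$ to put it ``in divergence form'': that operation is not available, since the term contains no spatial derivative, and the numerical solution satisfies a mass balance only against $w\in W_{h}^{n}$ (and with the upwind flux), not against an arbitrary $\psi\in X^{n}$.

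Concretely, on your route the mismatch $\int_{J_n}\int_{\Omega}\big[\partial_t(\phi C_{\alpha,h}^{n})-\partial_t(\phi C_{\alpha,h,\tau}^{n})\big]\psi$ survives: Galerkin orthogonality removes the residual paired with $\Pi_{W_{h}^{n}}\psi$ only, while this mismatch remains paired with $\psi$ (equivalently with $\psi-\Pi_{W_{h}^{n}}\psi$), and it does not vanish for $\psi$ that is not piecewise constant in time; it is a jump-type quantity of order $\sum_m\big\|\phi\big(C_{\alpha,h}|_{\tau_{m+1}}-C_{\alpha,h}|_{\tau_m}\big)\big\|_{L^{2}}\,|\tau_{m+1}-\tau_m|^{1/2}\,\|\psi_t\|_{L^{2}}$, which is not controlled by any of the four estimators in the statement. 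So the hoped-for cancellation ``when tested against $\psi_h$'' addresses the wrong pairing. Once you replace this step by the paper's opening identity \eqref{eq:lem1_dualn} and then split $\bs{u}_{\alpha,h,\tau}^{n}\cdot\nabla\psi$ as in \eqref{eq:lem1_fluxt}--\eqref{eq:lem1_fluxup} into $I_1+I_2+I_3$, the remainder of your argument (Cauchy--Schwarz for $I_2$, $I_3$; projection plus space/time Poincar\'e for $I_1$, yielding the $CC_{poin}$ residual terms) is essentially the paper's proof and goes through.
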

\begin{proof}
  Since $s_{\alpha}^n$, $p_{\alpha}^n$ and $\bs{u}_{\alpha}^n$ are exact saturation, pressure and velocity, using the phase mass concentration formulation introduced in Section \ref{sec:math} to simplify the notation, for each $\psi \in X^{n}$ we have
  \begin{equation}
    \begin{split}
      &\int_{J_n}\bigg(\int_{\Omega}\partial_{t}\Big(\phi C_{\alpha}^{n}-\phi C_{\alpha,h,\tau}^{n}\Big)\psi-\int_{\Omega}\Big(\bs{u}_{\alpha}^{n}-\bs{u}_{\alpha,h,\tau}^{n}\Big)\cdot\nabla\psi\bigg) \\
      &=\int_{J_n}\bigg(\int_{\Omega}q_{\alpha}\psi-\partial_{t}\Big(\phi C_{\alpha,h,\tau}^{n}\Big)\psi+\bs{u}_{\alpha,h,\tau}^{n}\cdot\nabla\psi\bigg)
    \end{split}
    \label{eq:lem1_dualn}
  \end{equation}
  We split the term $\int_{J_n}\int_{\Omega}\bs{u}_{\alpha,h,\tau}^{n}\cdot\nabla\psi$ into two parts such that
  \begin{equation}
    \int_{J_n}\int_{\Omega}\bs{u}_{\alpha,h,\tau}^{n}\cdot\nabla\psi=\int_{J_n}\int_{\Omega}\bs{u}_{\alpha,h}^{n}\cdot\nabla\psi+\int_{J_n}\sum_{\Omega_{i} \in \Omega}\int_{\Omega_{i}}\Big(\bs{u}_{\alpha,h,\tau}^{n}-\bs{u}_{\alpha,h}^{n}\Big)\cdot\nabla\psi
    \label{eq:lem1_fluxt}
  \end{equation}
  Next, we split $\int_{J_n}\int_{\Omega}\bs{u}_{\alpha,h}^{n}\cdot\nabla\psi$ into two parts as
  \begin{equation}
    \int_{J_n}\int_{\Omega}\bs{u}_{\alpha,h}^{n}\cdot\nabla\psi=-\int_{J_n}\int_{\Omega}\Big(\nabla\cdot\bs{u}_{up,\alpha,h}^{n}\Big)\psi+\int_{J_n}\int_{\Omega}\Big(\bs{u}_{\alpha,h}^{n}-\bs{u}_{up,\alpha,h}^{n}\Big)\cdot\nabla\psi
    \label{eq:lem1_fluxup}
  \end{equation}
  Therefore, by Eqn.\eqref{eq:lem1_dualn}, \eqref{eq:lem1_fluxt} and \eqref{eq:lem1_fluxup}, the dual norm can be separated into three terms such that
  \begin{equation}
    \int_{J_n}\bigg(\int_{\Omega}\partial_{t}\Big(\phi C_{\alpha,h}^{n}-\phi C_{\alpha,h,\tau}^{n}\Big)\psi-\int_{\Omega}\Big(\bs{u}_{\alpha,h}^{n}-\bs{u}_{\alpha,h,\tau}^{n}\Big)\cdot\nabla\psi\bigg)=I_{1}+I_{2}+I_{3}
    \label{eq:lem1_ncomb}
  \end{equation}
  where
  \begin{equation}
    I_{1}=\sum_{E_{i}^{m} \in \mathcal{T}_{h}^{n}}\int_{E_{i}^{m}}q_{\alpha}\psi-\partial_{t}\Big(\phi C_{\alpha,h,\tau}^{n}\Big)\psi-\nabla\cdot\bs{u}_{up,\alpha,h}^{n}\psi
    \label{eq:lem1_i1}
  \end{equation}
  \begin{equation}
    I_{2}=\sum_{E_{i}^{m} \in \mathcal{T}_{h}^{n}}\int_{E_{i}^{m}}\Big(\bs{u}_{\alpha,h}^{n}-\bs{u}_{up,\alpha,h}^{n}\Big)\cdot\nabla\psi
    \label{eq:lem1_i2}
  \end{equation}
  \begin{equation}
    I_{3}=\sum_{E_{i}^{m} \in \mathcal{T}_{h}^{n}}\int_{E_{i}^{m}}\Big(\bs{u}_{\alpha,h,\tau}^{n}-\bs{u}_{\alpha,h}^{n}\Big)\cdot\nabla\psi
    \label{eq:lem1_i3}
  \end{equation}
  Since $s_{\alpha,h}^{n}$, $p_{\alpha,h}^{n}$, $\bs{u}_{\alpha,h}^{n}$ are the numerical solutions of Eqn.\eqref{eq:evmb} and \eqref{eq:evda} we have the following
  \begin{equation}
    \sum_{E_{i}^{m} \in \mathcal{T}_{h}^{n}}\int_{E_{i}^{m}}\bigg(q_{\alpha}-\partial_{t}\Big(\phi C_{\alpha,h,\tau}^{n}\Big)-\nabla\cdot\bs{u}_{up,\alpha,h}^{n}\bigg)\ w=0\quad \forall w\in W_{h}^{n}
    \label{eq:lem1_nzero}
  \end{equation}
  We take $w=\Pi_{W_{h}^{n}}\psi$ and by Poincaré inequality obtain the following bound for $I_{1}$
  \begin{equation}
    \begin{split}
      I_{1}&=\sum_{E_{i}^{m} \in \mathcal{T}_{h}^{n}}\int_{E_{i}^{m}}\bigg(q_{\alpha}-\partial_{t}\Big(\phi C_{\alpha,h,\tau}^{n}\Big)-\nabla\cdot\bs{u}_{up,\alpha,h}^{n}\bigg)\bigg(\psi-\Pi_{W_{h}^{n}}\psi\bigg) \\
      &\leq C_{poin}\sum_{E_{i}^{m} \in \mathcal{T}_{h}^{n}}\Big(\eta_{t,r,\alpha,i}^{n,m}+\eta_{s,r,\alpha,i}^{n,m}\Big)\Big(\|\nabla\psi\|_{L^{2}(E_{i}^{m})}+\|\psi_{t}\|_{L^{2}(E_{i}^{m})}\Big) \\
      &\leq CC_{poin}\Bigg\{\Bigg(\sum_{E_{i}^{m} \in \mathcal{T}_{h}^{n}}\Big(\eta_{s,f,\alpha,i}^{n,m}\Big)^{2}\Bigg)^{\frac{1}{2}}+\Bigg(\sum_{E_{i}^{m} \in \mathcal{T}_{h}^{n}}\Big(\eta_{t,r,\alpha,i}^{n,m}\Big)^{2}\Bigg)^{\frac{1}{2}}\Bigg\}\|\psi\|_{X^{n}}
    \end{split}
    \label{eq:lem1_i1bound}
  \end{equation}
  Next for $I_{2}$, by Cauchy\textendash Schwarz inequality we have
  \begin{equation}
    I_{2}=\sum_{E_{i}^{m} \in \mathcal{T}_{h}^{n}}\int_{E_{i}^{m}}\Big(\bs{u}_{\alpha,h}^{n}-\bs{u}_{up,\alpha,h}^{n}\Big)\cdot\nabla\psi \leq \sum_{E_{i}^{m} \in \mathcal{T}_{h}^{n}}\eta_{s,f,\alpha,i}^{n,m}\|K^{\frac{1}{2}}\nabla\psi\|_{L^{2}(E_{i}^{m})} \leq \Bigg(\sum_{E_{i}^{m} \in \mathcal{T}_{h}^{n}}\Big(\eta_{s,f,\alpha,i}^{n,m}\Big)^{2}\Bigg)^{\frac{1}{2}}\|\psi\|_{X^{n}}
    \label{eq:lem1_i2bound}
  \end{equation}
  Similarly, $I_{3}$ goes as
  \begin{equation}
      I_{3}=\sum_{E_{i}^{m} \in \mathcal{T}_{h}^{n}}\int_{E_{i}^{m}}\Big(\bs{u}_{\alpha,h,\tau}^{n}-\bs{u}_{\alpha,h}^{n}\Big)\cdot\nabla\psi \leq \Bigg(\sum_{E_{i}^{m} \in \mathcal{T}_{h}^{n}}\Big(\eta_{t,f,\alpha,i}^{n,m}\Big)^{2}\Bigg)^{\frac{1}{2}}\|\psi\|_{X^{n}}
    \label{eq:lem1_i3bound}
  \end{equation}
  By the definition of $N_{\alpha}$, the inequality for the dual norm of the residual is proved.
\end{proof}
\noindent In the following lemma, we will provide an upper bound estimate for the non-nonconformity error measure.
\begin{lemma}
Assuming $K\in C^1(\Omega)$, there exist constant C such that
  \begin{equation*}
    N_{\alpha,p}^{n}\leq C\Bigg\{\Bigg(\sum_{E_{i}^{m}\in\mathcal{T}_{h}^{n}}\Big(\eta_{t,p,\alpha,i}^{n,m}\Big)^{2}\Bigg)^{\frac{1}{2}}+\Bigg(\sum_{E_{i}^{m}\in\mathcal{T}_{h}^{n}}\Big(\eta_{s,p,\alpha,i}^{n,m}\Big)^{2}\Bigg)^{\frac{1}{2}}\Bigg\}
  \end{equation*}
\end{lemma}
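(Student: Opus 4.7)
The strategy is a triangle-inequality split followed by a duality / integration-by-parts argument that exploits the discrete orthogonality built into the mixed formulation together with the vector-potential approximation provided by Assumption~\ref{ass:assumption}.

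First I insert $\pm\tilde{\bs{u}}_{\alpha,h}^{n}$ inside the infimum to obtain
\begin{equation*}
  N_{\alpha,p}^{n}\leq \Bigl\|K^{-1/2}\bigl(\tilde{\bs{u}}_{\alpha,h,\tau}^{n}-\tilde{\bs{u}}_{\alpha,h}^{n}\bigr)\Bigr\|_{L^{2}(J_n\times\Omega)}+\inf_{\psi\in X^{n}}\Bigl\|K^{-1/2}\bigl(\tilde{\bs{u}}_{\alpha,h}^{n}+K\nabla\psi\bigr)\Bigr\|_{L^{2}(J_n\times\Omega)}.
\end{equation*}
The first summand is exactly $\bigl(\sum_{E_i^m}(\eta_{t,p,\alpha,i}^{n,m})^{2}\bigr)^{1/2}$ by the definition \eqref{eq:tnoncon}, so the remaining task is to bound the second summand by the spatial nonconformity estimator.

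Next, I interpret the second infimum as a $K^{-1}$-weighted orthogonal distance from $\tilde{\bs{u}}_{\alpha,h}^{n}$ to the closed subspace $\{K\nabla\phi:\phi\in X^{n}\}$, and pass to the dual formulation
\begin{equation*}
  \inf_{\psi\in X^{n}}\Bigl\|K^{-1/2}\bigl(\tilde{\bs{u}}_{\alpha,h}^{n}+K\nabla\psi\bigr)\Bigr\|_{L^{2}}=\sup_{\bs{z}}\int_{J_n\times\Omega}K^{-1}\tilde{\bs{u}}_{\alpha,h}^{n}\cdot\bs{z},
\end{equation*}
where $\bs{z}$ ranges over vector fields satisfying $\int K^{-1}|\bs{z}|^{2}\leq 1$ together with $\int \bs{z}\cdot\nabla\phi=0$ for every $\phi\in X^{n}$. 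The second condition forces $\bs{z}$ to be weakly divergence-free in space with vanishing normal trace on $\partial\Omega$, so a Helmholtz potential $r$ with $\bs{z}=\nabla\times r$ and $\|\nabla r\|_{L^{2}}\leq C\|\bs{z}\|_{L^{2}_{K^{-1}}}$ exists.

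The decisive step is then to invoke Assumption~\ref{ass:assumption}: setting $r_{h}=\Pi_{M_{h}^{n}}(r)$ makes $\nabla\times r_{h}\in\bs{V}_{h}^{n,*}$ and $\nabla\cdot(\nabla\times r_{h})=0$. Testing \eqref{eq:evda} against $\bs{v}=\nabla\times r_{h}$ therefore kills the right-hand side, yielding the discrete orthogonality $\int K^{-1}\tilde{\bs{u}}_{\alpha,h}^{n}\cdot\nabla\times r_{h}=0$. Subtracting this zero and integrating by parts element-wise gives
\begin{equation*}
  \int_{J_n\times\Omega} K^{-1}\tilde{\bs{u}}_{\alpha,h}^{n}\cdot\bs{z}=\sum_{E_{i}^{m}}\int_{E_{i}^{m}}\nabla\times\bigl(K^{-1}\tilde{\bs{u}}_{\alpha,h}^{n}\bigr)\cdot(r-r_{h})-\sum_{e}\int_{e}\bigl[\bigl(K^{-1}\tilde{\bs{u}}_{\alpha,h}^{n}\bigr)\times\bs{n}_{e}\bigr](r-r_{h}).
\end{equation*}
Element-wise Cauchy--Schwarz followed by the volume estimate $\|r-r_h\|_{L^2(E_i^m)}\leq C|\Omega_i|\|\nabla r\|_{L^2(E_i^m)}$ and the edge estimate $\|r-r_h\|_{L^2(e)}^2\leq C|e|\|r\|_{H^{1/2}(e)}^2$ reproduces exactly the two squared contributions appearing in the definition \eqref{eq:snoncon} of $\eta_{s,p,\alpha,i}^{n,m}$, paired with norms of $r$ that are controlled by $\|\bs{z}\|_{L^{2}_{K^{-1}}}\leq 1$ through the potential bound.

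The main obstacle I expect is justifying the vector-potential reconstruction uniformly across the time slab, namely that every admissible $\bs{z}$ admits an $r$ with both $\|\nabla r\|_{L^{2}}$ and $\sum_{e}|e|\|r\|_{H^{1/2}(e)}^{2}$ controlled by $\|\bs{z}\|_{L^{2}_{K^{-1}}}$, which is where the topology of $\Omega$ enters and where the $K\in C^{1}(\Omega)$ hypothesis is used. The same regularity also guarantees that the piecewise $\nabla\times(K^{-1}\tilde{\bs{u}}_{\alpha,h}^{n})$ and the tangential jumps across interior edges are well-defined objects, so that the element-wise integration by parts is legitimate. Once these points are in hand, the summations collapse by Cauchy--Schwarz to the claimed $\ell^{2}$ combination of $\eta_{t,p,\alpha,i}^{n,m}$ and $\eta_{s,p,\alpha,i}^{n,m}$.
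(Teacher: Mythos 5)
Your argument is correct and follows essentially the same route as the paper: the triangle-inequality split isolating $\eta_{t,p,\alpha,i}^{n,m}$, then controlling the remaining nonconformity through a divergence-free (Helmholtz) potential, the projection $\Pi_{M_h^n}$ from Assumption~\ref{ass:assumption}, the discrete orthogonality obtained by testing \eqref{eq:evda} with a discrete curl, and element-wise integration by parts with Cauchy--Schwarz yielding $\eta_{s,p,\alpha,i}^{n,m}$. Your dual (sup over weighted-unit divergence-free $\bs{z}$) characterization of the infimum is just an equivalent rephrasing of the paper's direct Helmholtz decomposition $K^{-1}\tilde{\bs{u}}_{\alpha,h}^{n}=\nabla\phi_0+\nabla\times\phi_1$, with your $r$ playing the role of $\phi_1$.
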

\begin{proof}
  First, using Helmholtz decomposition, we have
  \begin{equation}
    K^{-1}\tilde{\bs{u}}_{\alpha,h}^{n}=\nabla\phi_{0}+\nabla\times\phi_{1}
    \label{eq:lem2_Helmholtz}
  \end{equation}
  with $\phi_0\in H^1(\Omega)$, $\phi_1\in H(curl,\Omega)$ for $d=3$ and $\phi_1\in H^1(\Omega)$ for $d=2$. Since $\psi \in X^n$, $\nabla\psi \in L^2(J_n \times \Omega)$. Meanwhile $\nabla\phi_0 \in L^2(\Omega)$ so $(\nabla\phi_0)_{\tau} \in L^2(J_n \times \Omega)$. Thus we have
  \begin{equation}
    \begin{aligned}
      \inf_{\psi\in X^{n}}\int_{J_n}\int_{\Omega}K^{-1}\Big(\tilde{\bs{u}}_{\alpha,h}^{n}+K\nabla\psi\Big)^{2} & \leq 2\int_{J_n}\int_{\Omega}K^{-1}\big|\tilde{\bs{u}}_{\alpha,h}^{n}-K\nabla\phi_{0}\big|^{2}+2\int_{J_n}\int_{\Omega}K\big|\nabla\phi_{0}-\big(\nabla\phi_{0}\big)_{\tau}\big|^{2}\\
      &=2\ \Bigg(\int_{J_n}\int_{\Omega}K\big|\nabla\times\phi_{1}\big|^{2}\Bigg)+2\big\|\tilde{\bs{u}}_{\alpha,h}^{n}-\tilde{\bs{u}}_{\alpha,h,\tau}^{n}\big\|_{L^{2}}^{2}
    \end{aligned}
    \label{eq:lem2_nonconfn}
  \end{equation}
  By Eqn.\eqref{eq:lem2_Helmholtz} and $\nabla\phi_0 \perp \nabla\times\phi_1$, we have
  \begin{equation}
    \begin{aligned}
      \int_{J_n}\int_{\Omega}\big|\nabla\times\phi_{1}\big|^{2}&=\int_{J_n}\int_{\Omega}\Big(K^{-1}\tilde{\bs{u}}_{\alpha,h}^{n}-\nabla\phi_{0}\Big)\cdot\Big(\nabla\times\phi_{1}\Big)\\
      &=\int_{J_n}\int_{\Omega}\Big(K^{-1}\tilde{\bs{u}}_{\alpha,h}^{n}\Big)\cdot\Big(\nabla\times\phi_{1}\Big)
    \end{aligned}
    \label{eq:lem2_curlphi1}
  \end{equation}
  Using Assumption \ref{ass:assumption}, there exist a $\Pi_{M^n_h}\phi_1\in M^n_h$ such that for all $E_{i}^{m}=(\tau_m,\tau_{m+1}]\times \Omega_i \in\mathcal{T}_{h}^{n}$,
  \begin{equation*}
    \begin{aligned}
      \big\|\phi_{1}-\Pi_{M^n_h}\phi_{1}\big\|^2_{L^{2}(E_{i}^{m})} & \leq C\big|\Omega_i\big|^2\big\|\nabla\phi_{1}\big\|_{L^{2}(E_{i}^{m})}^{2}\\
      \sum_{e\subset E_{i}^{m}}\big\|\phi_{1}-\Pi_{M^n_h}\phi_{1}\big\|_{L^{2}(e)}^{2} & \leq C\sum_{e\subset E_{i}^{m}}|e|\ \big\|\phi_{1}-\Pi_{M^n_h}\phi_{1}\big\|_{H^{1/2}(e)}^{2}\leq\sum_{e\subset E_{i}^{m}}C|e|\ \big\|\nabla\phi_{1}\big\|_{L^{2}(E_{i}^{m})}^{2}.
    \end{aligned}
  \end{equation*}
  Then $\nabla\times\Pi_{M^n_h}\phi_{1}\in V_{h}^{n,*}$, by Eqn.\eqref{eq:evda} we have
  \begin{equation*}
    \int_{J_n}\int_{\Omega}K^{-1}\tilde{\bs{u}}_{\alpha,h}^{n}\cdot\Big(\nabla\times\Pi_{M^n_h}\phi_{1}\Big)=\int_{J_n}\int_{\Omega}p_{\alpha,h}^n\nabla\cdot\Big(\nabla\times\Pi_{M^n_h}\phi_{1}\Big)=0
  \end{equation*}
  Therefore, we have
  \begin{equation}
    \begin{aligned}
      & \int_{J_n}\int_{\Omega}\Big(K^{-1}\tilde{\bs{u}}_{\alpha,h}^{n}\Big)\cdot\Big(\nabla\times\phi_{1}\Big)\\
      = & \int_{J_n}\int_{\Omega}\Big(K^{-1}\tilde{\bs{u}}_{\alpha,h}^{n}\Big)\cdot\Big[\nabla\times\Big(\phi_{1}-\Pi_{M^n_h}\phi_{1}\Big)\Big]\\
      \leq & \int_{J_n}\int_{\Omega}\nabla\times\Big(K^{-1}\tilde{\bs{u}}_{\alpha,h}^{n}\Big)\cdot\Big(\phi_{1}-\Pi_{M^n_h}\phi_{1}\Big)+\sum_{e\in\Gamma}\int_{J_n}\int_{e}\Big[\Big(K^{-1}\tilde{\bs{u}}_{\alpha,h}^{n}\Big)\times \bs{n}\Big]\cdot\Big(\phi_{1}-\Pi_{M^n_h}\phi_{1}\Big)\\
      \leq & \sum_{E_{i}^{m} \in \mathcal{T}_{h}^n}\Big\|\nabla\times\Big(K^{-1}\tilde{\bs{u}}_{\alpha,h}^{n}\Big)\Big\|_{L^{2}(E_{i}^{m})}\Big\|\phi_{1}-\Pi_{M^n_h}\phi_{1}\Big\|_{L^{2}(E_{i}^{m})}+\sum_{e \in \Gamma}\Big\|\Big(K^{-1}\tilde{\bs{u}}_{\alpha,h}^{n}\Big)\times \bs{n}\Big\|_{L^{2}(e)}\Big\|\phi_{1}-\Pi_{M^n_h}\phi_{1}\Big\|_{L^{2}(e)}
    \end{aligned}
    \label{eq:lem2_cs}
  \end{equation}
  Apply Eqn.\eqref{eq:lem2_cs} to Eqn.\eqref{eq:lem2_curlphi1} we obtain
  \begin{equation}
    \begin{aligned}
      \int_{J_n}\int_{\Omega}\big|\nabla\times\phi_{1}\big|^{2} \leq & C\ \Bigg(\sum_{E_{i}^m\in\mathcal{T}_{h}^n}\big|\Omega_i\big|^{2}\Big\|\nabla\times\Big(K^{-1}\tilde{\bs{u}}_{\alpha,h,\tau}^{n}\Big)\Big\|_{L^{2}(E_{i}^m)}^{2}+\sum_{e \in \Gamma}|e|\ \Big\|\Big(K^{-1}\tilde{\bs{u}}_{\alpha,h,\tau}^{n}\Big)\times \bs{n}\Big\|_{L^{2}(e)}^{2}\Bigg)^{\frac{1}{2}}\big\|\nabla\phi_{1}\big\|_{L^{2}(E_{i}^m)}^{2}\\
      \leq & C\ \Bigg(\sum_{E_{i}^m \in\mathcal{T}_{h}^n}\Big(\eta_{s,p,\alpha,i}^{n,m}\Big)^{2}\Bigg)^{\frac{1}{2}}\big\|\nabla\times\phi_{1}\big\|_{L^{2}(E_{i}^m)}
    \end{aligned}
    \label{eq:lem2_i1bound}
  \end{equation}
  Then the nonconformity error measure is bounded by
  \begin{equation}
    \begin{aligned}
      N_{\alpha,p}^{n} & \leq\inf_{\psi\in X^{n}}\Bigg\{\int_{J_n}\int_{\Omega}K^{-1}\Big(\tilde{\bs{u}}_{\alpha,h}^{n}+K\nabla\psi\Big)^{2}\Bigg\}^{\frac{1}{2}}+\Bigg(\int_{J_n}\int_{\Omega}K^{-1}\Big(\tilde{\bs{u}}_{\alpha,h}^{n}-\tilde{\bs{u}}_{\alpha,h,\tau}^{n}\Big)^{2}\Bigg)^{\frac{1}{2}}\\
      & \leq\int_{J_n}\int_{\Omega}\Big(K\big|\nabla\times\phi_{1}\big|^{2}\Big)+2\ \Bigg(\sum_{E_{i}^m\in\mathcal{T}_{h}^{n}}\Big(\eta_{t,p,\alpha,i}^{n,m}\Big)^{2}\Bigg)^{\frac{1}{2}}\\
      & \leq C\ \Bigg(\sum_{E_{i}^m\in\mathcal{T}_{h}^n}\Big(\eta_{s,p,\alpha,i}^{n,m}\Big)^{2}\Bigg)^{\frac{1}{2}}+2\ \Bigg(\sum_{E_{i}^m\in\mathcal{T}_{h}^{n}}\Big(\eta_{t,p,\alpha,i}^{n,m}\Big)^{2}\Bigg)^{\frac{1}{2}}
    \end{aligned}
    \label{eq:lem2_ncbound}
  \end{equation}
  The inequality of the nonconformity error measure is proved.
\end{proof}
The error estimator introduced in this section is used to search for refinement regions. In the next section, we will introduce our sequential local mesh refinement algorithm to reduce the size of the system and minimize the number of iterations required for convergence, while maintaining accuracy as compared to uniformly fine scale solution.

\section{Solution algorithm}
\label{sec:sol}

\par In this section we present the sequential local mesh refinement solver algorithm. The procedure starts by solving the problem at its coarsest resolution in space-time domain and then sequentially refines certain regions to its finest resolution. The coarsest time step is chosen such that the numerical convergence is guaranteed on the coarsest spatial grid. During the sequential refinement process, the solver first keeps the spatial mesh static at its coarsest level and searches for regions to refine in time. Once the last level of temporal refinement is implemented, the temporal discretization is finalized and the solver refines the mesh in space until reaching the finest resolution. Then the spatial grid is restored to the coarsest resolution, the solver marches forward in time with the coarsest time step and the whole process reiterates. The complete algorithm is illustrated in Fig.\ref{fig:algo}. We always start from the coarsest mesh and refines into deeper levels due to the tree data structure inherited from \cite{Li:0119, Ganis:0219, Finkel:0374}. The tree structure is represented by a group of pointers linked to each other. Allowing both refinement and agglomeration requires inserting and removing pointers in the middle of the tree and then re-associating hanging pointers. The toll caused by such complex operation will counteract the computational efficiency improvement. By constraining the operation to solely refinement, we are only required to evolve the tree by adding new levels on the bottom, which has a much smaller operation count.
\begin{figure}[t]
  \center{\includegraphics[scale=.75]{./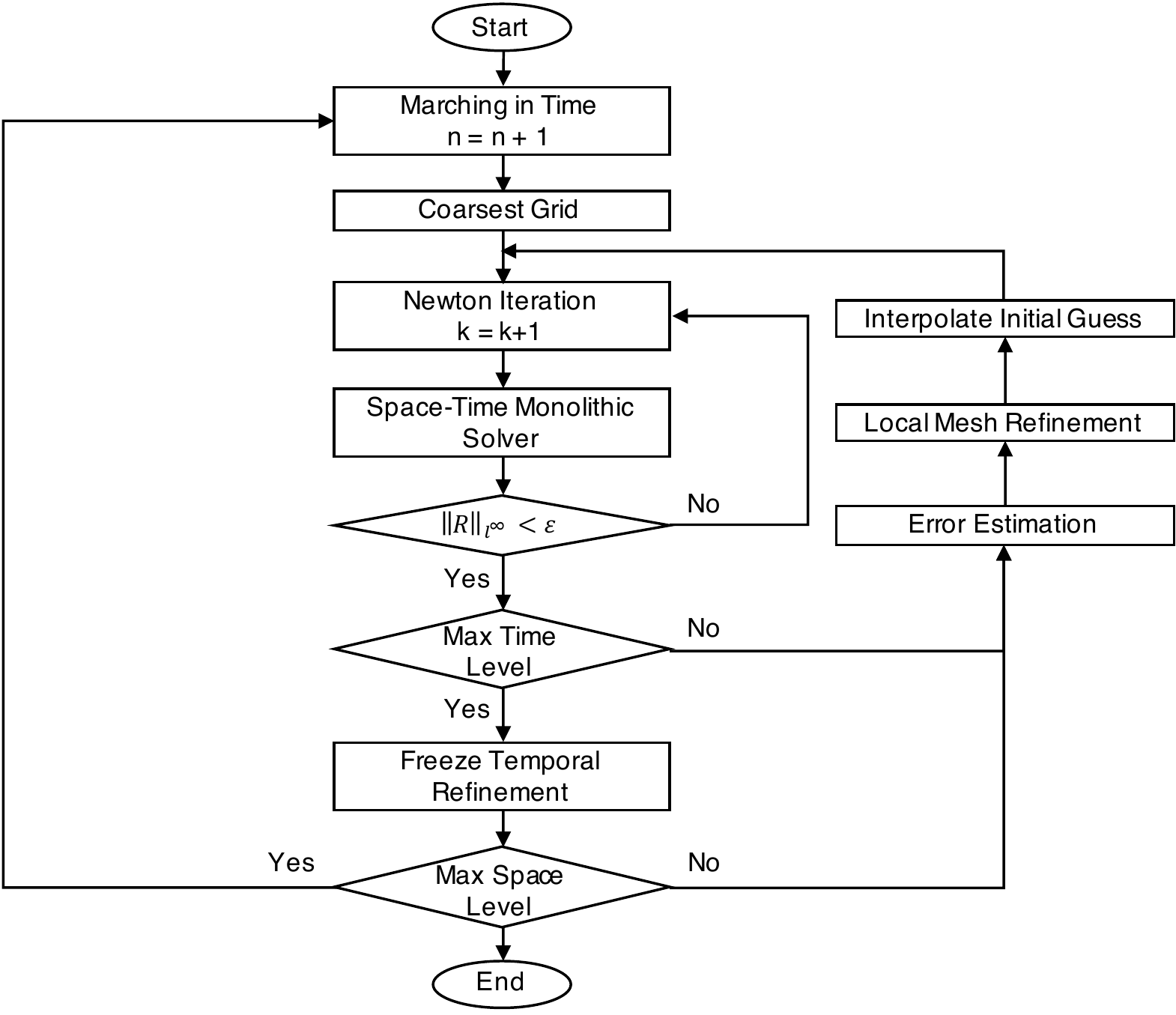}}
  \setlength{\abovecaptionskip}{5pt}
  \caption{\bf{Solution algorithm for sequential local mesh refinement solver with separate temporal and spatial adaptivity}}
  \label{fig:algo}
\end{figure}
\par Fig.\ref{fig:grid} demonstrates a sample semi-structured grid generated with the algorithm stated above. Here the $z$ axis represents time in a 2-D spatial problem. Separating temporal and spatial refinement makes the mesh construction more flexible. As observed from the plot, subdomains can have temporal refinement, spatial refinement or both. This flexibility reduces the total number of elements by two to three times as compared to the isotropic refinement scheme implemented in \cite{Li:0119}, thus improving computational performance. Note that the solver always refine spatially to the finest resolution for cells with well contained, for accurate estimate of production rate and bottom-hole flowing pressure. Adding temporal refinements for these cells depends on whether the saturation front is sweeping through the well or not.
\begin{figure}[t]
  \center{\includegraphics[width=\textwidth]{./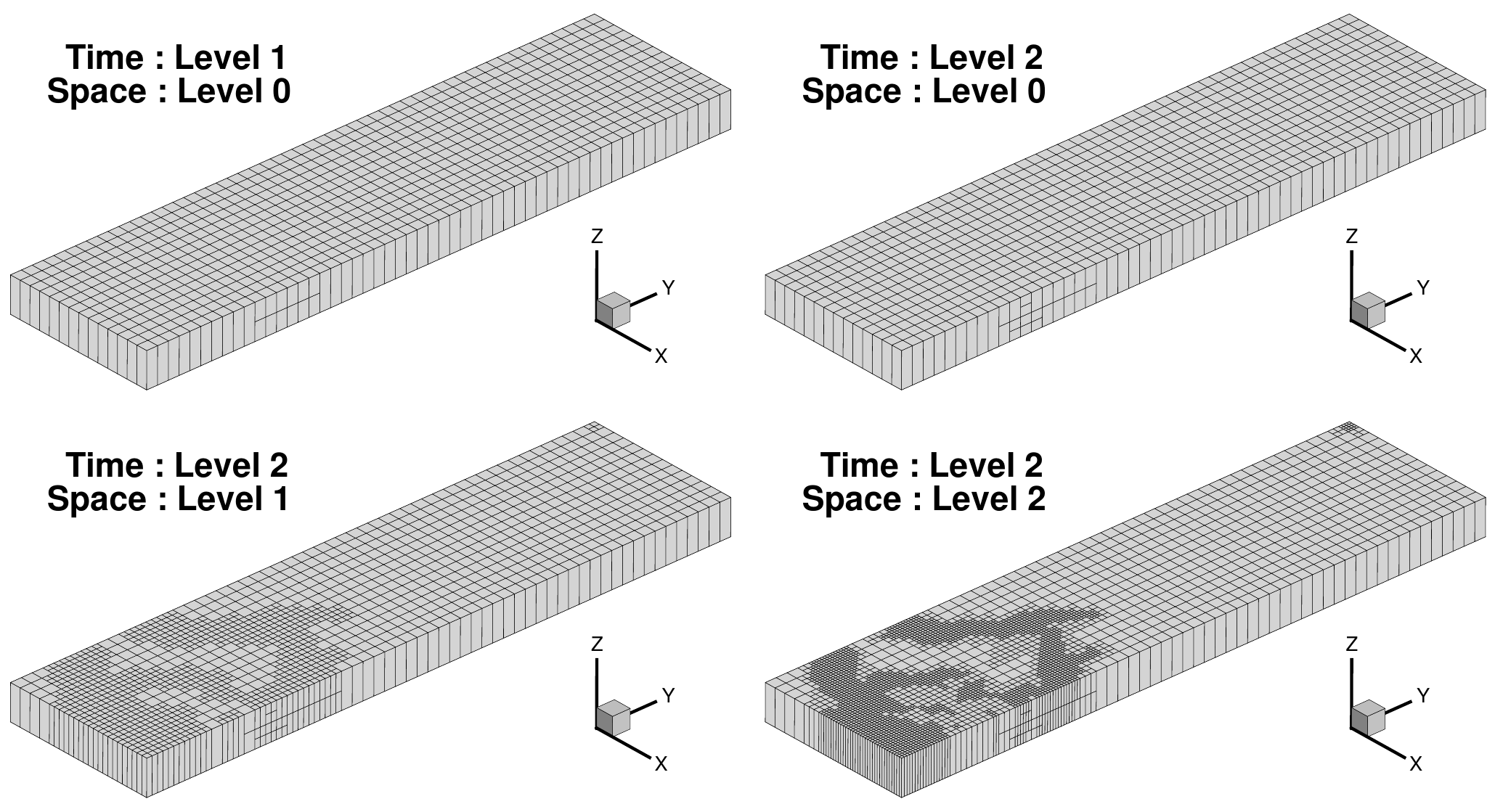}}
  \setlength{\abovecaptionskip}{-10pt}
  \caption{\bf{Sample grid generated during sequential local mesh refinement from coarsest to finest space-time resolution}}
  \label{fig:grid}
\end{figure}
\par We use the error estimators defined in Section \ref{sec:err} to scan regions for refinement that in return diminishes the upper bound of the error measure. First we study the spatial and temporal residual estimators defined by Eqn.\eqref{eq:sres} and \eqref{eq:tres}, similar to \cite{Singh:0918}. Provided by linear projection of the solution on the previous mesh, the initial guess of unknowns after each refinement procedure is naturally close to the true solution. Consequently, the residual estimators is only useful for indicating the main refinement region on the coarsest space-time resolution. Large estimator values appears only sporadically on all other resolutions as demonstrated by Fig.\ref{fig:resid}. The sporadic appearance infers strong heterogeneity of the underlying petrophysical properties. 
\begin{figure}[!]
  \center{\includegraphics[width=\textwidth]{./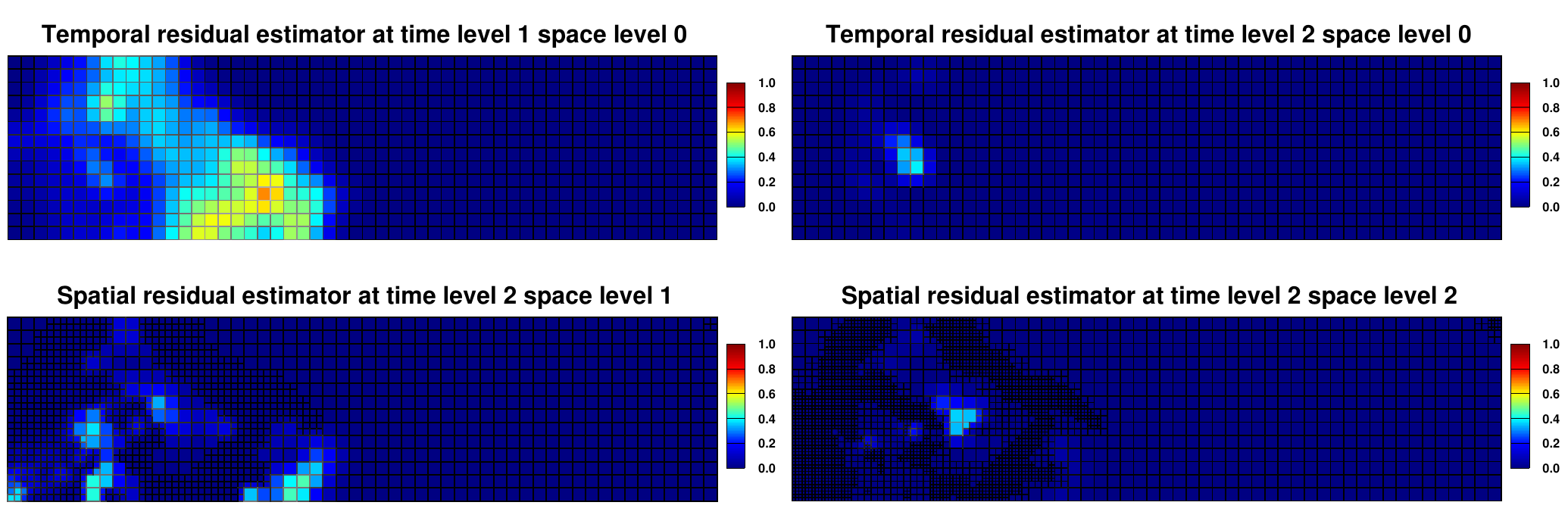}}
  \setlength{\abovecaptionskip}{-10pt}
  \caption{\bf{Normalized spatial and temporal residual estimator at each space and time refinement level}}
  \label{fig:resid}
\end{figure}
\begin{figure}[t]
  \center{\includegraphics[width=\textwidth]{./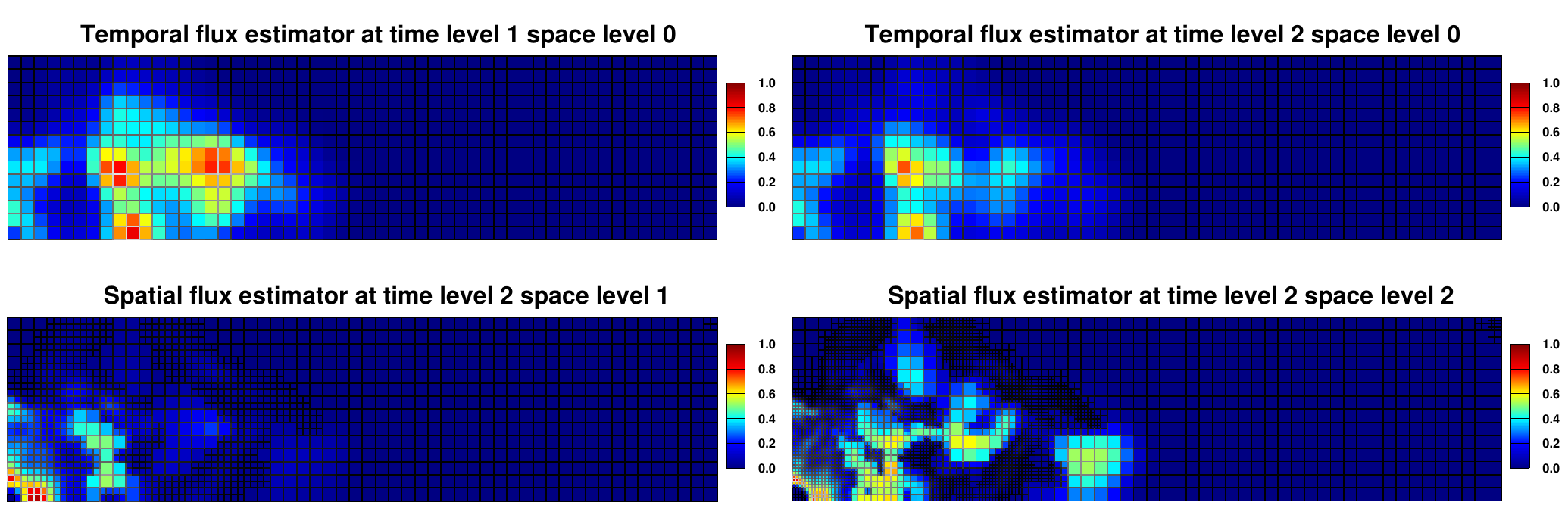}}
  \setlength{\abovecaptionskip}{-10pt}
  \caption{\bf{Normalized flux estimator at each space and time refinement level}}
  \label{fig:fluxest}
\end{figure}
\begin{figure}[!]
  \center{\includegraphics[width=\textwidth]{./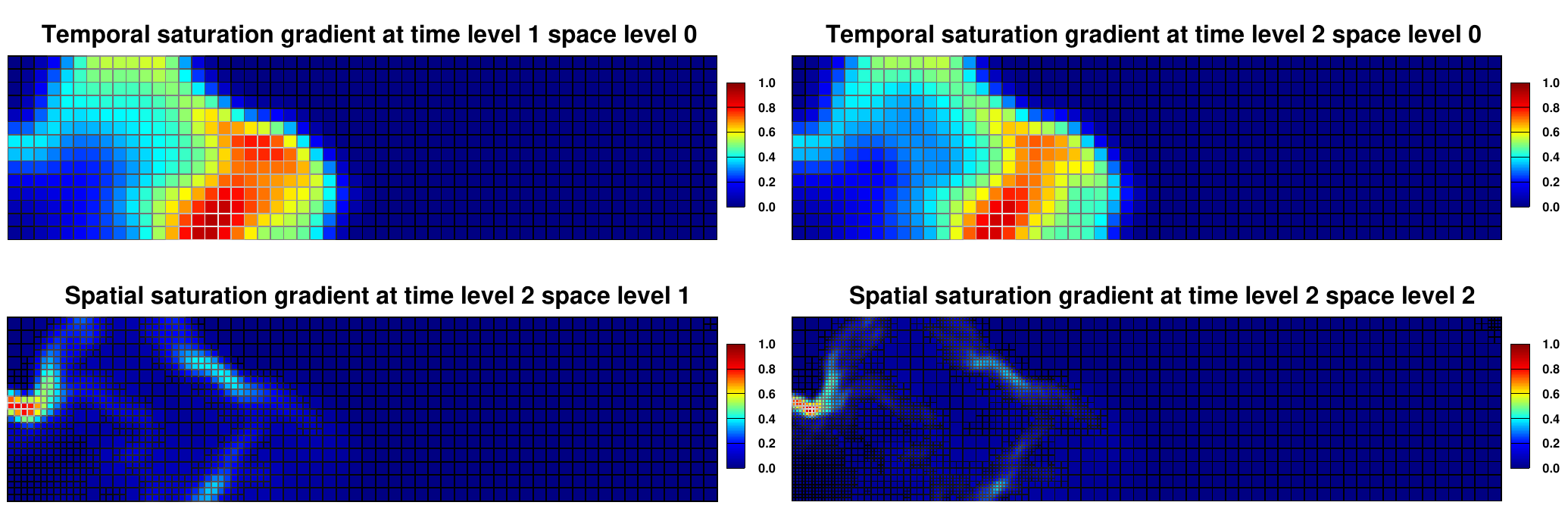}}
  \setlength{\abovecaptionskip}{-10pt}
  \caption{\bf{Normalized saturation gradient at each space and time refinement level}}
  \label{fig:errind}
\end{figure}
\par The most important estimators to ensure solution convergence and accuracy are the flux estimators. Let us first review the temporal one represented by Eqn.\eqref{eq:tflux}. We expand the original formulation as the following.
\begin{equation*}
  \eta_{t,f,\alpha,i}^{n,m}=\Bigg\{\int_{E_{i}^{m}}K^{-1}\bigg|\frac{\bs{u}_{\alpha,h}|_{\tau_{m+1}}-\bs{u}_{\alpha,h}|_{\tau_m}}{\tau_{m+1}-\tau_m}(\tau_{m+1}-t)\bigg|\Bigg\}^{\frac{1}{2}}=\Bigg\{\big|\Omega_i\big|K^{-1}\Big|\bs{u}_{\alpha,h}\big|_{\tau_{m+1}}-\bs{u}_{\alpha,h}\big|_{\tau_m}\Big|^2\ \frac{\tau_{m+1}-\tau_m}{3}\Bigg\}^{\frac{1}{2}}
\end{equation*}
Then the output is mainly controlled by the temporal flux difference term $\bs{u}_{\alpha,h}\big|_{\tau_{m+1}}-\bs{u}_{\alpha,h}\big|_{\tau_m}$, which we can further expand to
\begin{equation*}
  \begin{split}
    \bs{u}_{\alpha,h}\big|_{\tau_{m+1}}-\bs{u}_{\alpha,h}\big|_{\tau_m}=&-K\rho_{\alpha}\Big(p_{\alpha,h}\big|_{\tau_{m+1}}\Big)\frac{k_{r\alpha}\Big(s_{\alpha,h}\big|_{\tau_{m+1}}\Big)-k_{r\alpha}\Big(s_{\alpha,h}\big|_{\tau_m}\Big)}{\mu_{\alpha}}\nabla p_{\alpha,h}\big|_{\tau_{m+1}} \\
    &-K\frac{k_{r\alpha}\Big(s_{\alpha,h}\big|_{\tau_m}\Big)}{\mu_{\alpha}}\bigg[\rho_{\alpha}\Big(p_{\alpha,h}\big|_{\tau_{m+1}}\Big)\nabla p_{\alpha,h}\big|_{\tau_{m+1}}-\rho_{\alpha}\Big(p_{\alpha,h}\big|_{\tau_m}\Big)\nabla p_{\alpha,h}\big|_{\tau_m}\bigg]
  \end{split}
\end{equation*}
The second term in the above equation is effectively zero in slightly compressible flow since density variation caused by pressure is negligible and pressure gradient stays fairly constant in time. Moreover, temporal refinement is not necessary if large estimator output is caused by the leading constant $\frac{K}{\mu_{\alpha}}\rho_{\alpha}\big(p_{\alpha,h}|_{\tau_{m+1}}\big)\nabla p_{\alpha,h}|_{\tau_{m+1}}$ in the first term (eg. regions around the well with large pressure gradient), since pressure solution is smooth in time and does not trigger any convergence issues. We need to apply temporal refinement in regions with large $\eta_{t,f,\alpha,i}^{n,m}$ caused specifically by significant change in relative permeability $k_{r\alpha}\big(s_{\alpha,h}|_{\tau_{m+1}}\big)-k_{r\alpha}\big(s_{\alpha,h}|_{\tau_m}\big)$. Therefore we calculate the temporal water saturation gradient
\begin{equation}
  \varepsilon_{t,i}^{n,m}=\Bigg|\frac{\partial}{\partial t}s_{w,h,i}^{n,m}\Bigg|
  \label{eq:trefind}
\end{equation}
and applied refinement exclusively to regions with both $\eta_{t,f,\alpha,i}^{n,m}$ and $\varepsilon_{t,i}^{n,m}$ values exceeding the threshold.
\par Similarly, the spatial flux estimator $\eta_{s,f,\alpha,i}^{n,m}$ in Eqn.\eqref{eq:sflux} can be expanded to
\begin{equation*}
  \eta_{s,f,\alpha,i}^{n,m}=\bigg\{\big|\Omega_i\big|\ \big|\tau_{m+1}-\tau_m\big|K^{-1}\big|\bs{u}_{up,\alpha,h}-\bs{u}_{\alpha,h}\big|^2\bigg\}^{\frac{1}{2}}
\end{equation*}
with the output mainly controlled by the flux spatial difference $\bs{u}_{up,\alpha,h}-\bs{u}_{\alpha,h}$. We can also expand this term as the following.
\begin{equation*}
  \bs{u}_{up,\alpha,h}-\bs{u}_{\alpha,h}=-K\rho_{\alpha}\frac{k_{r\alpha}\big(s_{up,\alpha,h}\big)-k_{r\alpha}\big(s_{\alpha,h}\big)}{\mu_{\alpha}}\nabla p_{\alpha,h}
\end{equation*}
Surely, we need to refine regions with significant change in relative permeability $k_{r\alpha}\big(s_{up,\alpha,h}\big)-k_{r\alpha}\big(s_{\alpha,h}\big)$ to accurately represent the features of the reservoir. Furthermore regions with large estimator output caused by the leading constant $\frac{K}{\mu_{\alpha}}\rho_{\alpha}\nabla p_{\alpha,h}$ also need special care. Such regions are characterized by rapid mass flow and refining them facilitates convergence. Therefore we calculate the spatial water saturation gradient.
\begin{equation}
  \varepsilon_{s,i}^{n,m}=
  \begin{cases}
  \begin{aligned}
    &\big\|\nabla s_{w,h,i}^{n,m}(\bs{x_{i}})\big\|_{l^{\infty}} &\quad if\ \big\|\nabla s_{w,h,i}^{n,m}(\bs{x_i})\big\|_{l^{\infty}}>\big\|\nabla s_{w,h,i}^{n-1,m}(\bs{x_i})\big\|_{l^{\infty}}  \\
    &\frac{1}{2}\Big(\big\|\nabla s_{w,h,i}^{n,m}(\bs{x_i})\big\|_{l^{\infty}}+\big\|\nabla s_{w,h,i}^{n-1,m}(\bs{x_i})\big\|_{l^{\infty}}\Big) &\quad if\ \big\|\nabla s_{w,h,i}^{n,m}(\bs{x_i})\big\|_{l^{\infty}}\leq\big\|\nabla s_{w,h,i}^{n-1,m}(\bs{x_i})\big\|_{l^{\infty}}
  \end{aligned}
  \end{cases}
  \label{eq:srefind}
\end{equation}
and apply refinement to regions with either $\eta_{s,f,\alpha,i}^{n,m}$ or $\varepsilon_{s,i}^{n,m}$ values exceeding the threshold. Please note that we are taking some extra steps when calculating spatial saturation gradient by looking at the previous time step values. This mechanism ensures a more accurate exposure of features in the system, especially in channelized permeability distributions. Fig.\ref{fig:fluxest} and \ref{fig:errind} show flux estimator and saturation gradient at each refinement level.
\begin{figure}[!]
  \center{\includegraphics[width=\textwidth]{./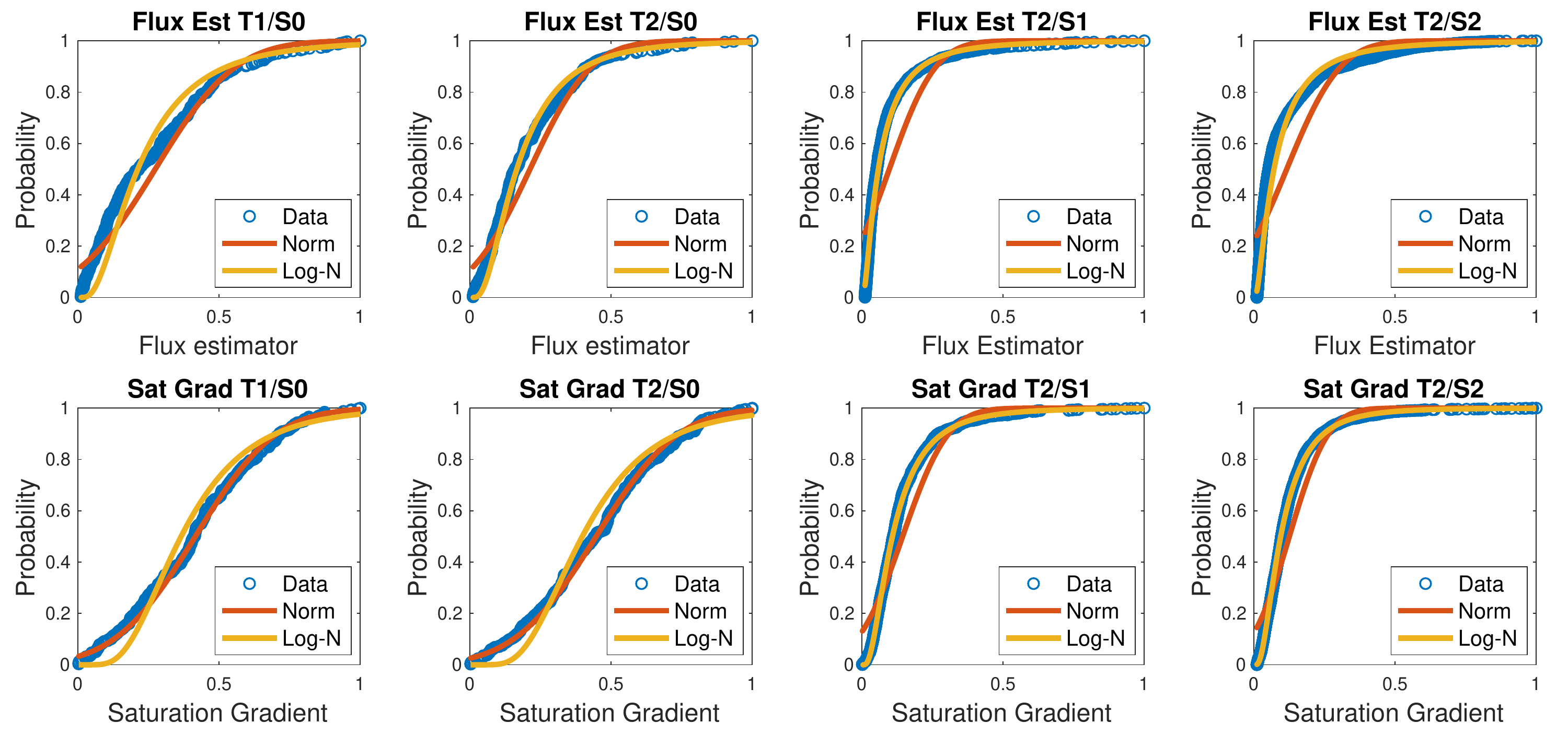}}
  \setlength{\abovecaptionskip}{-10pt}
  \caption{\bf{Cumulative distribution function fitted to flux estimator (top 4 plots) and saturation gradient (bottom 4 plots) data at each refinement level}}
  \label{fig:cdfer}
\end{figure}
\par Instead of setting a subjective threshold, we outline the regions for refinement by distribution percentile. We first define $[0.01,1]$ as the analysis range of flux error estimator and saturation gradient. Values below $0.01$ are considered too small and thus neglected. The threshold is determined by distribution. The cumulative distribution function of flux error estimator and saturation gradient at each refinement level is plotted in Fig.\ref{fig:cdfer} against sample data collected during simulation. As illustrated by the graphs, the data for both variables generally follow log-normal distribution trend. During temporal adaptation, we refine cells with $50\%$ largest values in both flux estimator and saturation gradient. Therefore, we use the log-mean, which covers approximately $50\%$ of the analysis range, as the threshold. We notice that during temporal refinement, the cumulative distribution functions of both variables are better described by normal distribution. However, we still choose the log-mean as the threshold since it leads to a slight over-refinement in time and thus better guarantees Newton convergence. During spatial adaptation, we refine cells with either $50\%$ largest values in the saturation gradient or $10\%$ largest values in flux estimator. Therefore, the thresholds for the two variables are the log-mean and one standard deviation above the log-mean, corresponding to their respective distribution.
\par Finally for the non-conformity estimators, $\eta_{t,p,\alpha,i}^{n,m}$ is naturally diminished during temporal refinement using $\eta_{t,f,\alpha,i}^{n,m}$ since the two estimators have similar formulations. $\eta_{s,p,\alpha,i}^{n,m}$ represents the tangential gradient of flux on non-conformal grid interfaces. To reduce this term, we apply mesh smoothing algorithm. Adjacent grid cells cannot be more than one refinement level apart within the hierarchical tree structure in both spatial and temporal dimensions. Such algorithm also ensures a smooth transition from fine grid into coarse grid and thus facilitates convergence.

\section{Numerical results}
\label{sec:num}

\begin{figure}[t]
  \center{\includegraphics[width=\textwidth]{./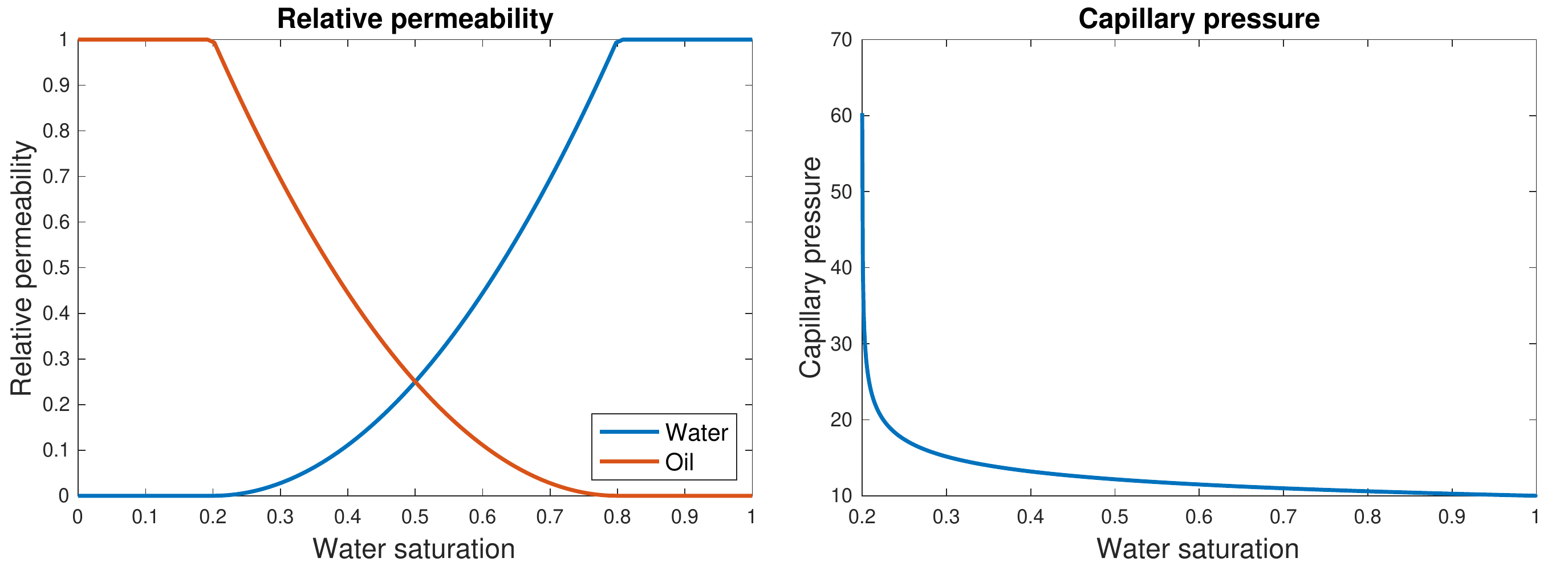}}
  \setlength{\abovecaptionskip}{-10pt}
  \caption{\bf{Relative permeability (left) and capillary pressure (right) curve for numerical experiments}}
  \label{fig:brooks}
\end{figure}
\par In this section we will show results from two numerical experiments on 2-D two-phase flow model. Both experiments use the same fluid data from the SPE10 dataset \cite{Christie:0801}. The oil and water reference densities in Eqn.\eqref{eq:comp} are taken to be $53\ lb/ft^{3}$ and $64\ lb/ft^{3}$ and compressibilities are $1\times 10^{-4}\ psi^{-1}$ and $3\times 10^{-6}\ psi^{-1}$ respectively. We use Brooks's Corey model for both relative permeability and capillary pressure. The equations for relative permeability are
\begin{equation}
  \begin{cases}
  \begin{aligned}
    &k_{rw}=k_{rw}^{0}\bigg(\frac{s_{w}-s_{wirr}}{1-s_{or}-s_{wirr}}\bigg)^{n_{w}} \\
    &k_{ro}=k_{ro}^{0}\bigg(\frac{s_{o}-s_{or}}{1-s_{or}-s_{wirr}}\bigg)^{n_{o}}
  \end{aligned}
  \end{cases}
  \label{eq:bcrel}
\end{equation}
The endpoint values are $s_{or}=s_{wirr}=0.2$ and $k_{ro}^{0}=k_{rw}^{0}=1.0$ while the model exponents are $n_{w}=n_{o}=2$. The equation for capillary pressure is
\begin{equation}
  p_{c}(s_{w})=P_{en,cow}\bigg(\frac{1-s_{wirr}}{s_{w}-s_{wirr}}\bigg)^{l_{cow}}
  \label{eq:bccap}
\end{equation}
with $P_{en,cow}=10\ psi$ and $l_{cow}=0.2$. Fig.\ref{fig:brooks} visualizes the relative permeability and capillary pressure curve. The two experiments uses gaussian-like and channelized permeability and porosity distributions from SPE 10 dataset \cite{Christie:0801} layer 20 and 52, respectively. The reservoir size is $56ft \times 216ft \times 1ft$. We place a water rate specified injection well at the bottom left corner and a pressure specified production well at the upper right corner. The water injection rate is $1\ ft^{3}/day$ and production pressure is $1000\ psi$. Furthermore, the initial pressure and water saturation are set to be $1000\ psi$ and $0.2$.

\subsection{Gaussian-like Permeability Distribution}
\label{subsec:gau}

\par The gaussian-like permeability field comes from SPE 10 dataset layer 20. The fine scale petrophysical data are shown in Fig.\ref{fig:gaup}, assuming isotropic permeability. We allow three refinement levels in both space and time in our experiment. Although the framework allows different refinement ratios between levels, for the sake of simplicity we set the same ratio, a factor of 2, between all levels. We use the numerical homogenization technique introduced in \cite{Amanbek:0619} to upscale the fine scale permeability to different coarse levels. This calculation only needs to be performed once at the beginning of the experiment. The homogenized permeability distribution in $X$ and $Y$ directions, which does not manifest high anisotropy, is illustrated in Fig.\ref{fig:gauh}. The porosity is upscaled simply by weighted volumetric average and therefore is not visualized. The computational domain is $56\ ft\times 216\ ft\times 1\ ft\times 1000\ days$ with coarsest and finest element size of $8\ ft\times 8\ ft\times 1\ ft\times 10\ days$ and $1\ ft\times 1\ ft\times 1\ ft\times 1.25\ days$.
\begin{figure}[t]
  \center{\includegraphics[width=\textwidth]{./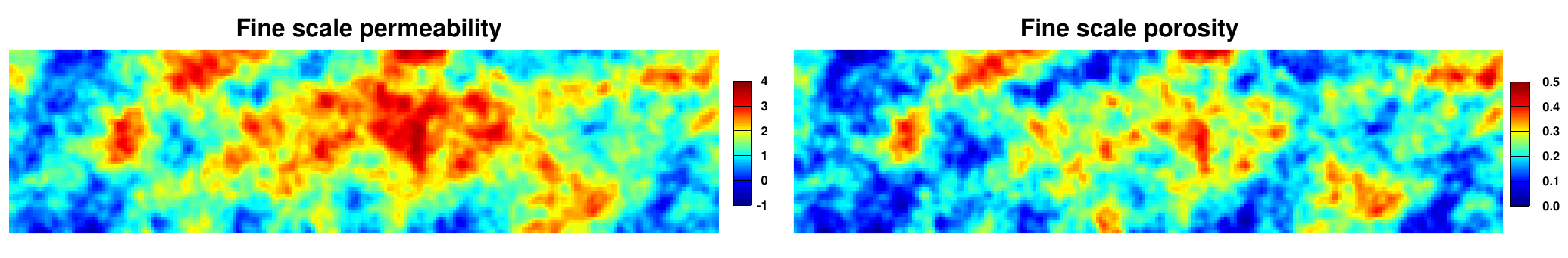}}
  \setlength{\abovecaptionskip}{-10pt}
  \caption{\bf{Gaussian-like fine scale permeability (left) and porosity (right) distribution}}
  \label{fig:gaup}
\end{figure}
\begin{figure}[!]
  \center{\includegraphics[width=\textwidth]{./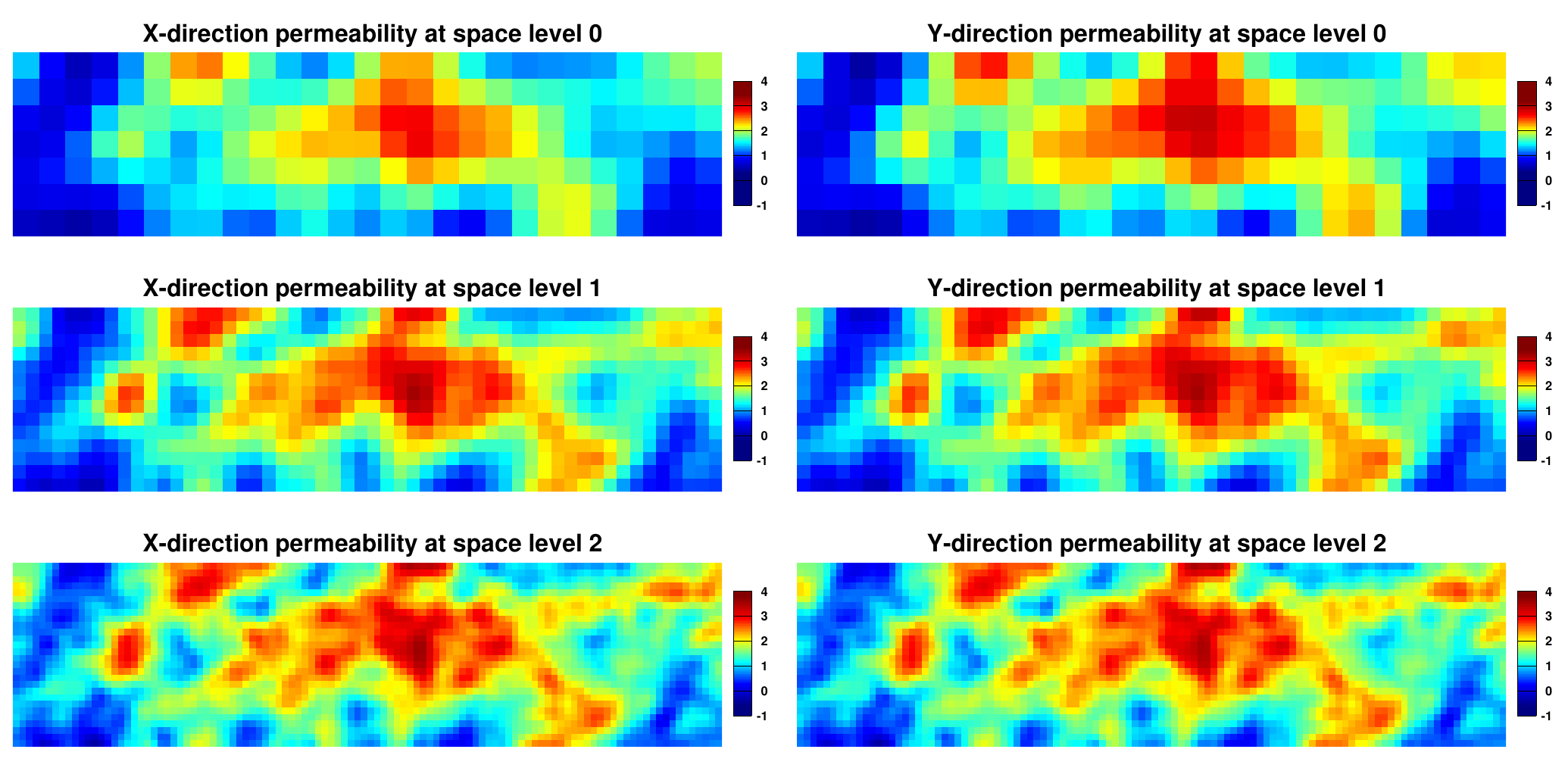}}
  \setlength{\abovecaptionskip}{-10pt}
  \caption{\bf{Homogenized gaussian-like permeability in $X$ and $Y$ direction for each space level}}
  \label{fig:gauh}
\end{figure}
\par The adaptive water saturation profile with its mesh as compared to fine scale solution at 100 and 500 days are plotted in Fig.\ref{fig:gaure}. We observe the finest mesh stays concentrated at the water front to correctly capture the dramatic changes in saturation. In this region, mass transfer is not dominated by either oil or water phase and thus contributes the most non-linearity and requires temporal refinement for stable Newton convergence. Elements behind the water front is gradually coarsened due to the decreased saturation variation. Overall, the saturation profile provided by the sequential refinement solver looks similar to the fine scale solution. Fig.\ref{fig:gaurc} shows the production rates and cumulative recoveries of the two solutions, which are nearly identical. The oil rate from the sequential refinement solver appear to be slightly smoother at the early time and has tiny oscillation around water break though, which is caused by the coarse mesh.
\begin{figure}[t]
  \center{\includegraphics[width=\textwidth]{./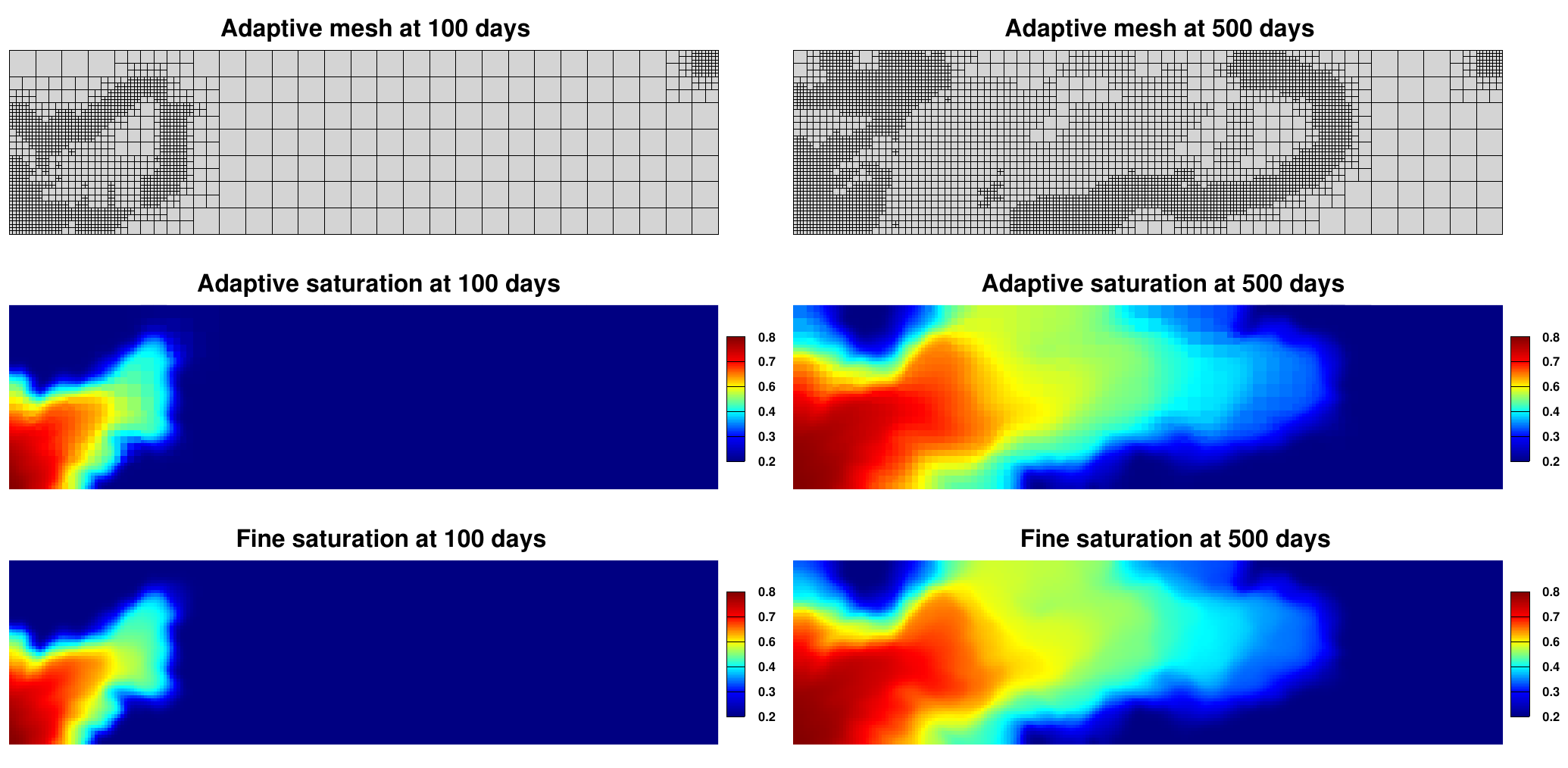}}
  \setlength{\abovecaptionskip}{-10pt}
  \caption{\bf{Adaptive mesh (top) and water saturation profile(middle) generated by sequential refinement solver as compared to fine scale solution (bottom) at 100 and 500 days in gaussian-like permeability field}}
  \label{fig:gaure}
\end{figure}
\begin{figure}[!]
  \center{\includegraphics[width=\textwidth]{./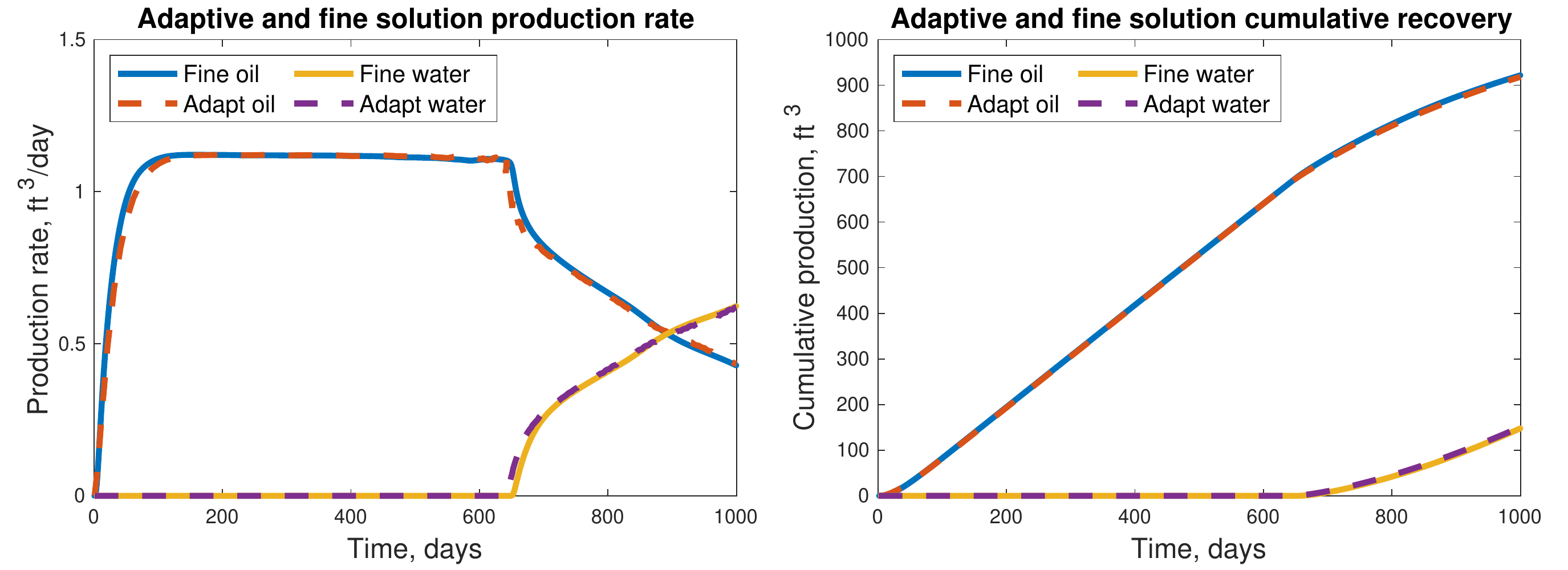}}
  \setlength{\abovecaptionskip}{-10pt}
  \caption{\bf{Two phase production rates and cumulative recoveries from adaptive and fine scale solution of gaussian-like permeability field}}
  \label{fig:gaurc}
\end{figure}
\par The program execution time is presented in Fig.\ref{fig:rtg}. The total execution time consists of system setup which constructs the linear system, solving the linear system and data handle which mainly involves copying and pasting data from the current to the previous time step. Since the experiment problem size is still small, we use both direct and iterative solver to resolve the linear system. The semi-structured space-time mesh results in highly non-symmetric matrices and therefore we use GMRES with ILU preconditioner as our iterative solver. We observe 8 and 4 times speedup on system setup and data handle using direct solver. These two types of operations are highly dependent upon the number of time steps taken and total number of refinement levels. Hence, the speedup scales linearly with the total temporal refinement ratio and the same runtime reduction behavior is observed when using iterative solver. The speedup on solving the linear system best represents the computational performance improvement. Since our problem size is small, the efficiency gain is not substantial when using direct solver. On the contrary, we observe 25 times speed up on solving the linear system when using iterative solver. Additional techniques on solving non-symmetric linear systems iteratively, such as relaxing linear solver tolerance using forcing function \cite{Eisenstat:0196, Lacroix:0403} and applying specialized preconditioners \cite{France:0219, Dawson:0997} for Krylov-based method, may be utilized for additional acceleration. Note that as we move towards more complex models such as 3-D black oil, the solution to the corresponding linear system is only accessible through iterative methods. Thus we should expect significant improvement on computational efficiency once we approach those types of problems.
\begin{figure}[!]
  \center{\includegraphics[width=\textwidth]{./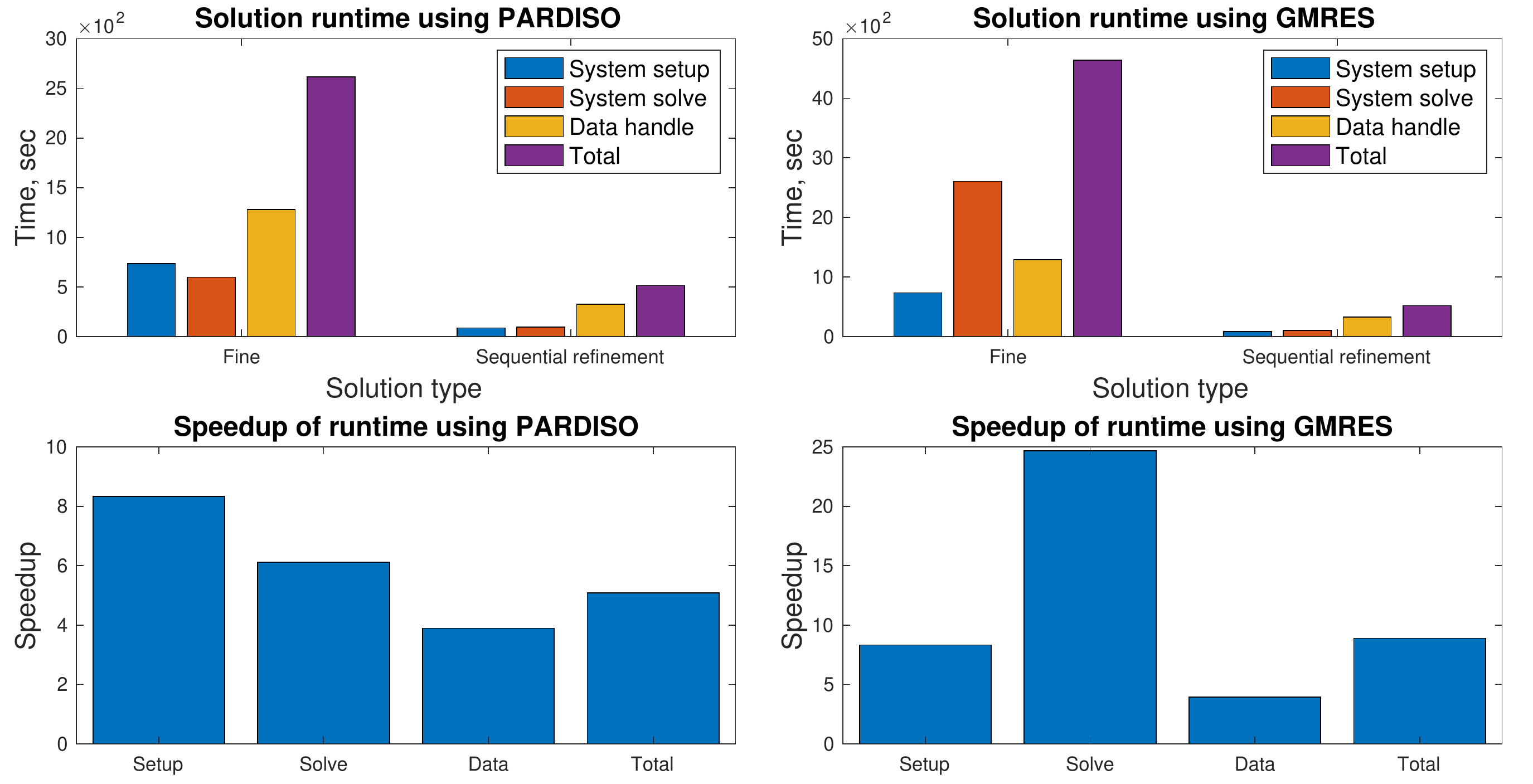}}
  \setlength{\abovecaptionskip}{-10pt}
  \caption{\bf{Runtime comparison between sequential refinement and fine scale solution using direct and iterative linear solver for gaussian-like permeability field}}
  \label{fig:rtg}
\end{figure}

\subsection{Channelized Permeability Distribution}
\label{subsec:chan}

\begin{figure}[t]
  \center{\includegraphics[width=\textwidth]{./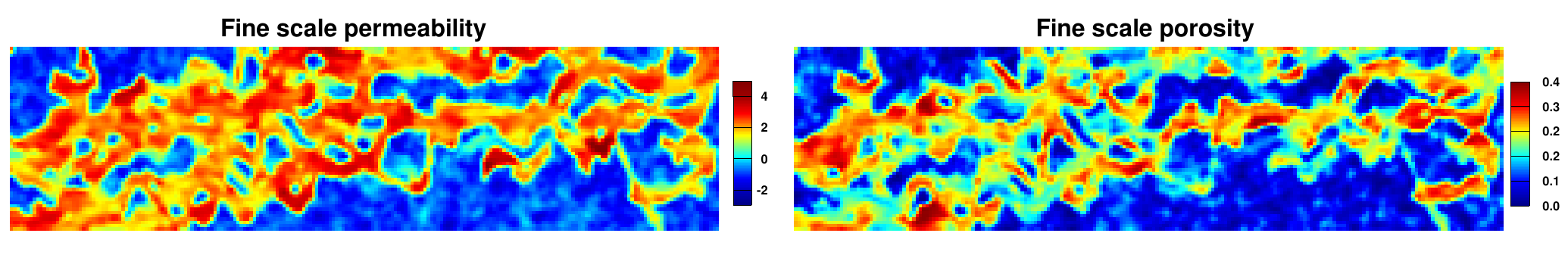}}
  \setlength{\abovecaptionskip}{-10pt}
  \caption{\bf{Channelized fine scale permeability (left) and porosity (right) distribution}}
  \label{fig:chap}
\end{figure}
\begin{figure}[!]
  \center{\includegraphics[width=\textwidth]{./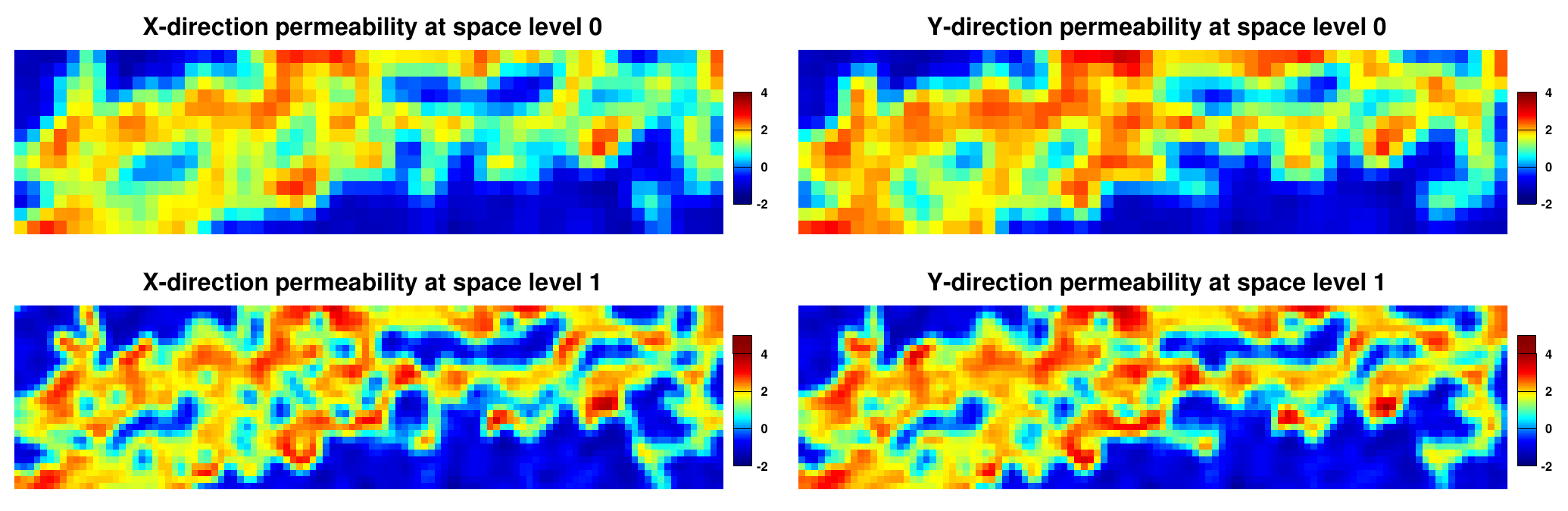}}
  \setlength{\abovecaptionskip}{-10pt}
  \caption{\bf{Homogenized channel permeability in $X$ and $Y$ direction for each space level}}
  \label{fig:chah}
\end{figure}
\par The channelized permeability field comes from SPE 10 dataset layer 52. The fine scale petrophysical data are shown in Fig.\ref{fig:chap}. Unlike the gaussian case, the permeability and porosity here is highly structured and some structures are subjected to destruction during the homogenization process. We allow two refinement levels in both space and time for this experiment. The refinement ratio is also set to a factor of 2 between all levels. During numerical homogenization, we impose oversampling technique introduced by \cite{Efendiev:09} and \cite{Chung:0916} to preserve channel connectivity as much as possible. The homogenized permeability distribution in $X$ and $Y$ direction is illustrated in Fig.\ref{fig:chah}. On the contrary to the gaussian case, the upscaled channel permeability is highly anisotropic. The computational domain is $56\ ft\times 216\ ft\times 1\ ft\times 1000\ days$ with coarsest and finest element size of $8\ ft\times 8\ ft\times 1\ ft\times 5\ days$ and $1\ ft\times 1\ ft\times 1\ ft\times 1.25\ days$.
\begin{figure}[!]
  \center{\includegraphics[width=\textwidth]{./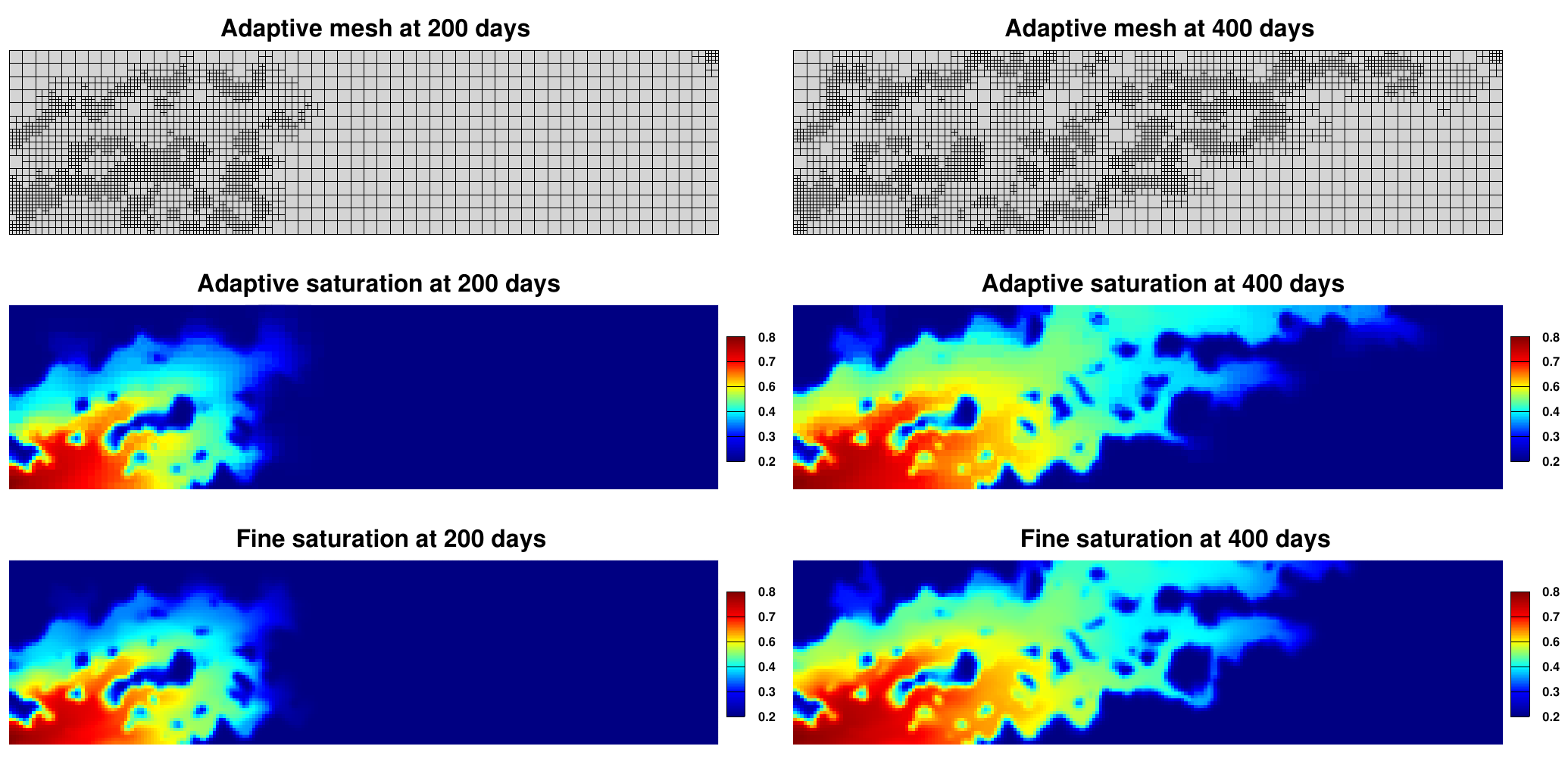}}
  \setlength{\abovecaptionskip}{-10pt}
  \caption{\bf{Adaptive mesh (top) and saturation profile(middle) generated by sequential refinement solver as compared to fine scale solution (bottom) at 200 and 400 days in channelized permeability field}}
  \label{fig:chare}
\end{figure}
\begin{figure}[t]
  \center{\includegraphics[width=\textwidth]{./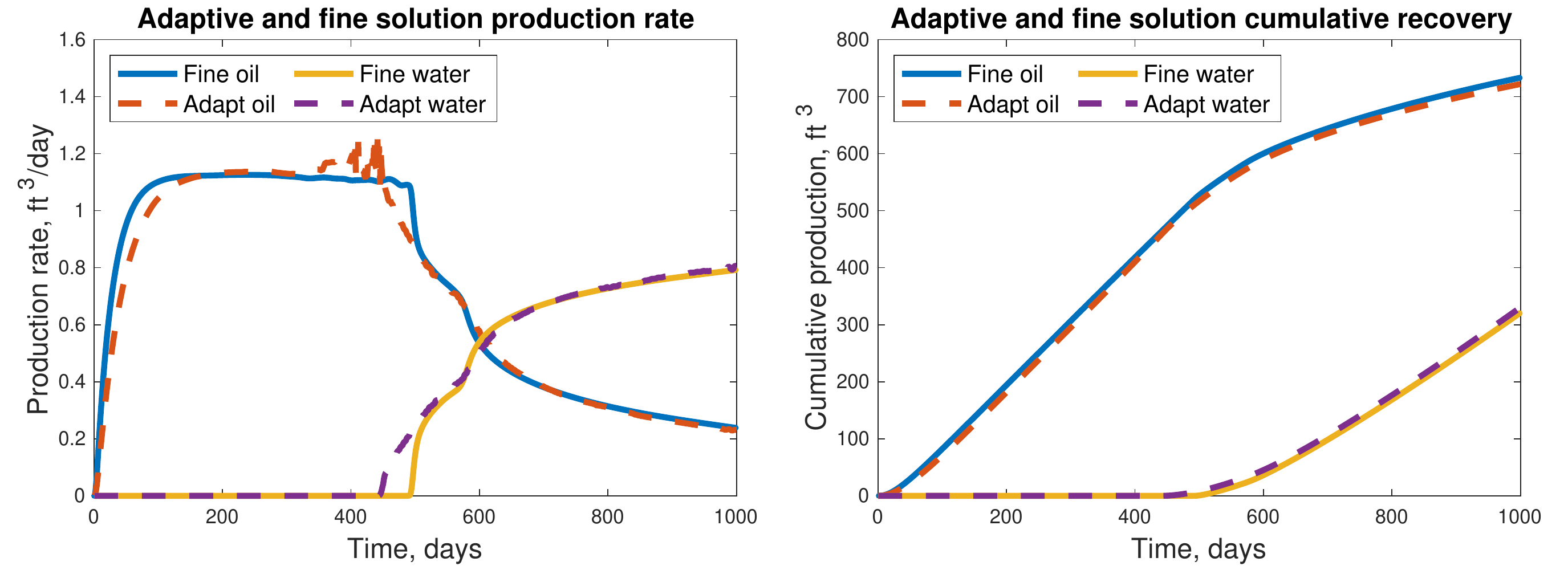}}
  \setlength{\abovecaptionskip}{-10pt}
  \caption{\bf{Two phase production rates and cumulative recoveries from adaptive and fine solution of channelized permeability field}}
  \label{fig:charc}
\end{figure}
\begin{figure}[!]
  \center{\includegraphics[width=\textwidth]{./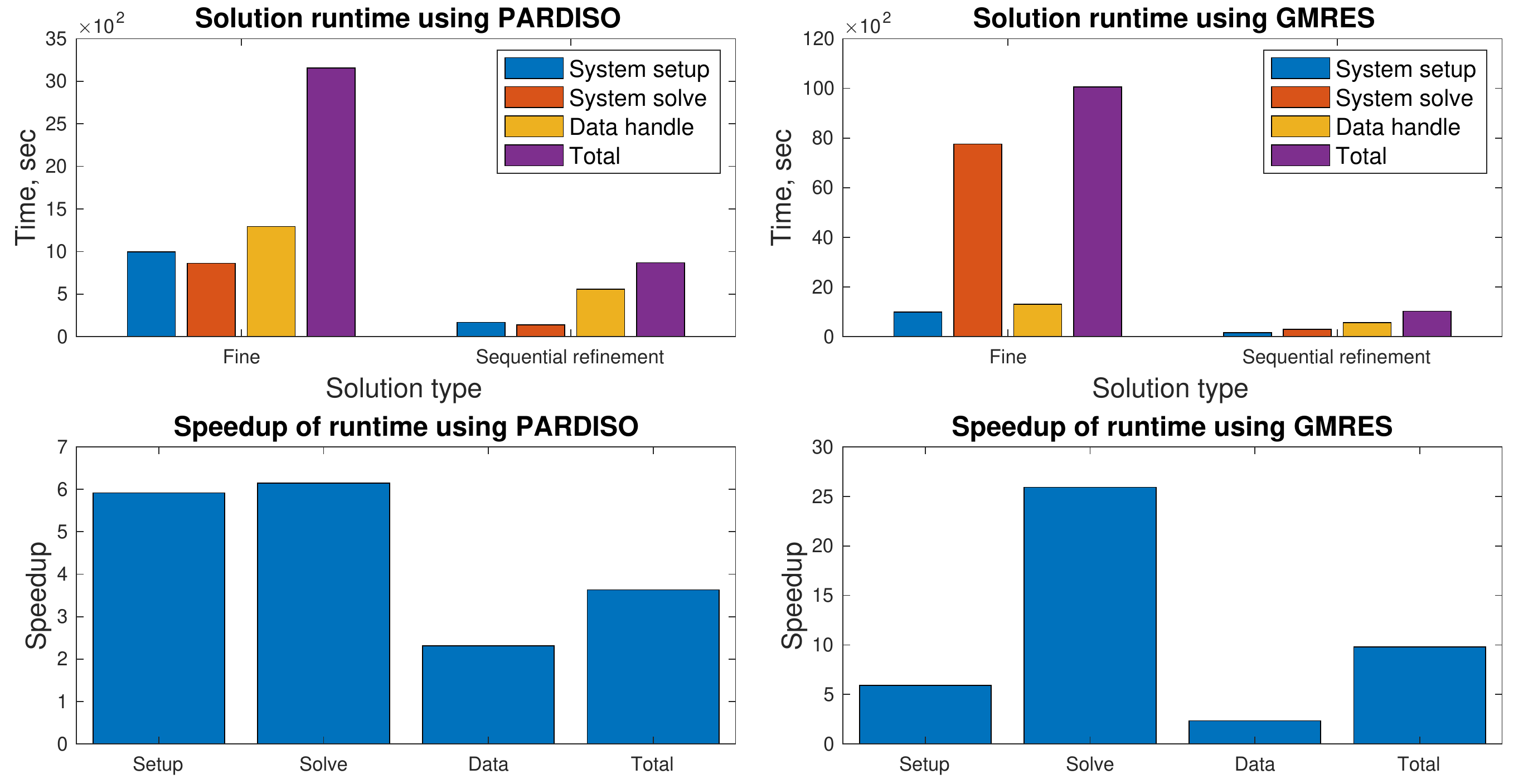}}
  \setlength{\abovecaptionskip}{-10pt}
  \caption{\bf{Runtime comparison between sequential refinement and fine scale solution using direct and iterative linear solver for channelized permeability field}}
  \label{fig:rtc}
\end{figure}
\par The adaptive water saturation profile with its mesh as compared to the fine scale solution at 200 and 400 days are plotted in Fig.\ref{fig:chare}. The overall production profile also resembles each other between the two solutions. Here, the fine mesh not only concentrates at the water front, but also outlines the channel structure. The channel boundary is characterized by dramatic contrast of permeability, thus resulting in steep water saturation gradient. The refinement algorithm detects these features and deploys mesh with appropriate size accordingly. Many low permeability spots inside the main high permeability channel are also accurately identified and represented. Fig.\ref{fig:charc} shows the production rates and cumulative recoveries of the two solutions. The adaptive and fine scale rates also look similar, however with obvious discrepancies. The rates from sequential refinement solver looks smoother than the fine scale solution at early time. It also suffers from slightly early water breakthrough. The oil and water cumulative production from the two solutions nearly overlap.
\par We also approach the solution by both direct and iterative method. The program execution time is shown in Fig.\ref{fig:rtc}. The speedup on system setup and data handle also scales linearly with total temporal refinement ratio. The solution time reduction by direct solver remains low. However, we still observe a 25 times speedup using iterative solver, even when the coarse time step is halved as compared to the gaussian case. The substantial improvement is caused by two main reasons. First of all, the flow and transport is constrained within the channel structure behind the water front, making the saturation variation effectively zero in other part of the reservoir. Consequently, the number of grid cells required to represent the channel structure and saturation front is relatively small, causing the adaptive solution easier to acquire. Secondly, the fine scale system consists of dramatic permeability contrast, resulting in the related linear system to have eigenvalues close to zero. Solving such linear system with Krylov-based iterative methods requires many iterations, making the fine scale solution harder to obtain.

\section{Conclusions}
\label{sec:con}

\par We have introduced an algorithm that constructs adaptive mesh using error estimators to solve non-linear two-phase flow problems with reduced execution time. The procedure sequentially refines the mesh from coarsest to finest resolution in large non-linearity regions, with temporal and spatial adaptivity separated to accurately expose features in the system while ensuring numerical convergence. After each refinement, the initial guess for the new mesh is generated by the solution on the previous mesh through linear projection, which accelerates convergence rate. Results from two numerical experiments are demonstrated. Rates and cumulative production from both experiments resembles well between the adaptive and fine scale solution. The water saturation profiles also look similar. We observe approximately 25 times speedup in solution time for the gaussian-like and the channelized permeability field. The channel case suffers from a slightly early water breakthrough, which could be mitigated through loosening refinement criterion. However, doing so will counteract the computational efficiency improvement. With the promising results from two-phase flow problems, we plan to test our algorithm on more complex models such as 3-D three-phase black oil system in the near future.

\section*{Acknowledgement}
\par The author would like to thank the Center for Subsurface Modeling for supporting this research. We would also like to extend our gratitude to Gurpreet Singh and Uwe Köcher for the discussion related to this work.

%% The Appendices part is started with the command \appendix;
%% appendix sections are then done as normal sections
\appendix

\section{Fully Discrete Formulation}
\label{sec:ful}

\par Consider the oil-water system, the variational form of Eqn.\eqref{eq:mb} through \eqref{eq:ic} is: find $\bs{u}_{\alpha,h}^{n}\in \bs{V}_{h}^{n,*}$, $\tilde{\bs{u}}_{\alpha,h}^{n}\in \bs{V}_{h}^{n,*}$, $s_{w,h}^{n}\in W_{h}^{n}$, $p_{o,h}^{n}\in W_{h}^{n}$ such that
\begin{equation}
  \bigg(\frac{\partial}{\partial t}\phi\Big(\rho_{w}s_{w,h}^{n}+\rho_{o}(1-s_{w,h}^{n})\Big),w\bigg)+\bigg(\nabla\cdot\Big(\bs{u}_{w,h}^{n}+\bs{u}_{o,h}^{n}\Big),w\bigg)=\bigg(q_{w}+q_{o},w\bigg)
  \label{eq:tot}
\end{equation}
\begin{equation}
  \bigg(\frac{\partial}{\partial t}\Big(\phi\rho_{w}s_{w,h}^{n}\Big),w\bigg)+\bigg(\nabla\cdot\bs{u}_{w,h}^{n},w\bigg)=\bigg(q_{w},w\bigg)
  \label{eq:wat}
\end{equation}
\begin{equation}
  \Big(K^{-1}\tilde{\bs{u}}_{o,h}^{n},\bs{v}\Big)-\Big(p_{o,h}^{n},\nabla\cdot\bs{v}\Big)=0
  \label{eq:omfe}
\end{equation}
\begin{equation}
  \Big(K^{-1}\tilde{\bs{u}}_{w,h}^{n},\bs{v}\Big)-\Big(p_{w,h}^{n},\nabla\cdot\bs{v}\Big)=-\Big(p_{c},\nabla\cdot\bs{v}\Big)
  \label{eq:wmfe}
\end{equation}
for all $\bs{v}\in\bs{V}_{h}^{n,*}$ and $w\in W_{h}^{n}$. The conversion between auxiliary and actual phase flux is referred to Eq.\eqref{eq:aux}. The oil saturation and water pressure are eliminated by the saturation constrain and the capillary pressure relation (assume oil phase being the non-wetting phase). 
\par For the fully discrete formulation, we will start by stating the basis functions in $RT_{0}\times DG_{0}$ discretization scheme. In spatial dimensions, the pressure and saturation are piecewise constants while velocity is piecewise linear. Meanwhile all variables are piecewise constants in temporal dimension as stated in Section \ref{sec:math}. Let $E_{i}^{m}=(\tau_{m},\tau_{m+1}] \times \Omega_{i}$ be a space-time element, we have
\begin{equation}
  w_{i}^{m}=
  \begin{cases}
  \begin{aligned}
    1 &\quad on\ E_{i}^{m}=\tau_{m}<t\leq \tau_{m+1} \bigcap x_{i-\frac{1}{2}}\leq x\leq x_{i+\frac{1}{2}} \\
    0 &\quad otherwise
  \end{aligned}
  \end{cases}
  \label{eq:cbase}
\end{equation}
\begin{equation}
  \bs{\varphi}_{i+\frac{1}{2}}^{m}=
  \begin{cases}
  \begin{aligned}
    \frac{x-x_{i-\frac{1}{2}}}{\big|E_{i}^{m}\big|}      &\quad on\ E_{i}^{m} \\
    \frac{x_{i+\frac{3}{2}}-x}{\big|E_{i+1}^{m}\big|} &\quad on\ E_{i+1}^{m}
  \end{aligned}
  \end{cases}
  \label{eq:lbase}
\end{equation}
The solution to Eqn.\eqref{eq:tot} through \eqref{eq:aux} can be written in discrete form using basis functions as
\begin{equation}
  \begin{cases}
  \begin{aligned}
    &p_{o}=\displaystyle\sum_{m=1}^{q}\sum_{i=1}^{r}P_{i}^{m}w_{i}^{m} \\
    &s_{w}=\displaystyle\sum_{m=1}^{q}\sum_{i=1}^{r}S_{w,i}^{m}w_{i}^{m} \\
    &\bs{u}_{\alpha}=\displaystyle\sum_{m=1}^{q}\sum_{i=1}^{r+1}U_{\alpha,i+\frac{1}{2}}^{m}\bs{\varphi}_{i+\frac{1}{2}}^{m} \\
    &\tilde{\bs{u}}_{\alpha}=\displaystyle\sum_{m=1}^{q}\sum_{i=1}^{r+1}\tilde{U}_{\alpha,i+\frac{1}{2}}^{m}\bs{\varphi}_{i+\frac{1}{2}}^{m} \\
  \end{aligned}
  \end{cases}
  \label{eq:disol}
\end{equation}
We remove the superscript $n$ and subscript $h$ in the above solution variables for this section since we need to use $n$ to pair basis functions. While keeping the solution in discrete form, we now substitute the testing functions in the variational forms of mass conservation and constitutive equation with $w_{j}^{n}$ and $\bs{\varphi}_{j+\frac{1}{2}}^{n}$. For the first term in Eqn.\eqref{eq:omfe} and \eqref{eq:wmfe} we obtain
\begin{equation}
  \Big(K^{-1}\tilde{\bs{u}}_{\alpha},\bs{\varphi}_{j+\frac{1}{2}}^{n}\Big)_{\Omega\times J}=\Bigg(K^{-1}\displaystyle\sum_{m=1}^{q}\sum_{i=1}^{r+1}\tilde{U}_{\alpha,i+\frac{1}{2}}^{m}\bs{\varphi}_{i+\frac{1}{2}}^{m},\bs{\varphi}_{j+\frac{1}{2}}^{n}\Bigg)_{\Omega\times J}=\frac{1}{2\Big|e_{j+\frac{1}{2}}^n\Big|}\Bigg(\frac{x_{j+\frac{1}{2}}-x_{j-\frac{1}{2}}}{K_{j}}+\frac{x_{j+\frac{3}{2}}-x_{j+\frac{1}{2}}}{K_{j+1}}\Bigg)\tilde{U}_{\alpha,j+\frac{1}{2}}^{n}
\label{eq:fir}
\end{equation}
Here, $\Big|e_{j+\frac{1}{2}}^n\Big|$ is an edge of a space-time element. Since the framework uses backward Euler scheme in time to avoid Courant-Fredricks-Levy condition, we have the construction
\begin{equation}
  \bs{\varphi}_{i+\frac{1}{2}}^{m}(e_{j+\frac{1}{2}}^{n})=
  \begin{cases}
  \begin{aligned}
    &\frac{1}{\Big|e_{j+\frac{1}{2}}^n\Big|} &\quad as\ i=j\ and\ m=n \\
    &0                                                       &\quad otherwise
  \end{aligned}
  \end{cases}
  \label{eq:stcon}
\end{equation}
The second term in Eqn.\eqref{eq:omfe} and \eqref{eq:wmfe} can be reformulated as
\begin{equation}
  \Big(p_{\alpha},\nabla\cdot\bs{\varphi}_{j+\frac{1}{2}}^{n}\Big)_{\Omega\times J}=\Bigg(\displaystyle\sum_{m=1}^{q}\sum_{i=1}^{r}P_{\alpha,i}^{m}w_{i}^{m},\nabla\cdot\bs{\varphi}_{j+\frac{1}{2}}^{n}\Bigg)_{\Omega\times J}=\int\limits_{E_{j}^{n}}\frac{P_{\alpha,j}^{n}}{\big|E_{j}^{n}\big|}-\int\limits_{E_{j+1}^{n}}\frac{P_{\alpha,j+1}^{n}}{\big|E_{j+1}^{n}\big|}=P_{\alpha,j}^{n}-P_{\alpha,j+1}^{n}
  \label{eq:sec}
\end{equation}
When non-matching grid caused by different time scales at $(j+\frac{1}{2})^-$ and $(j+\frac{1}{2})^+$ is encountered, assume the ratio between coarse and fine time step to be $\frac{\delta t_{c}}{\delta t_{f}}=\ell$, then for each $0 \leq k \leq \ell-1$
\begin{equation}
    \Big(p_{\alpha},\nabla\cdot\bs{\varphi}_{j+\frac{1}{2}}^{n-\frac{1}{\ell}k}\Big)_{\Omega\times J}=\Bigg(\displaystyle\sum_{m=1}^{q}\sum_{i=1}^{r}P_{\alpha,i}^{m}w_{i}^{m},\nabla\cdot\bs{\varphi}_{j+\frac{1}{2}}^{n-\frac{1}{\ell}k}\Bigg)_{\Omega\times J}=P_{\alpha,j}^{n-\frac{1}{\ell}k}-P_{\alpha,j+1}^{n}
  \label{eq:nm}
\end{equation}
The variational form of capillary pressure term can be revised in similar way as Eqn.\eqref{eq:sec} and \eqref{eq:nm}. Now we evaluate the mass conservation equation. The first term in Eqn.\eqref{eq:wat} becomes
\begin{equation}
  \bigg(\frac{\partial}{\partial t}\displaystyle\sum_{m=1}^{q}\sum_{i=1}^{r}\phi\rho_{w}s_{w,i}^{m}w_{i}^{m},w_{j}^{n}\bigg)_{\Omega\times J}=\Big((\phi\rho_{w}S_{w})_{j}^{n}-(\phi\rho_{w}S_{w})_{j}^{n-1}\Big)\big|E_{j}^{n-1}\big|
  \label{eq:wfir}
\end{equation}
In fine time scales, Eqn.\eqref{eq:wfir} can be altered as follow.
\begin{equation}
  \bigg(\frac{\partial}{\partial t}\displaystyle\sum_{m=1}^{q}\sum_{i=1}^{r}\phi\rho_{w}s_{w,i}^{m}w_{i}^{m},w_{j}^{n-\frac{1}{\ell}k}\bigg)_{\Omega\times J}=\Big((\phi\rho_{w}S_{w})_{j}^{n-\frac{1}{\ell}k}-(\phi\rho_{w}S_{w})_{j}^{n-\frac{1}{\ell}(k+1)}\Big)\Big|E_{j}^{n-\frac{1}{\ell}(k+1)}\Big|
  \label{eq:wfin}
\end{equation}
The second term is calculated as
\begin{equation}
    (\nabla\cdot\bs{u}_{w},w_{j}^{n})_{\Omega\times J}=\bigg(\nabla\cdot\displaystyle\sum_{m=1}^{q}\sum_{i=1}^{r+1}U_{\alpha,i+\frac{1}{2}}^{m}\bs{\varphi}_{i+\frac{1}{2}}^{m},w_{j}^{n}\bigg)_{\Omega\times J}=U_{w,j+\frac{1}{2}}^{n}-U_{w,j-\frac{1}{2}}^{n}
   \label{eq:wsec}
\end{equation}
The approach to handle non-matching grid is a little different for this term. Assume the fine time partition stays on $(j+\frac{1}{2})^-$ side, then on fine time elements we have
\begin{equation}
  (\nabla\cdot\bs{u}_{w},w_{j}^{n-\frac{1}{\ell}k})=U_{w,j+\frac{1}{2}}^{n-\frac{1}{\ell}k}-U_{w,j-\frac{1}{2}}^{n-\frac{1}{\ell}k}
  \label{eq:wsecf}
\end{equation}
while for the coarse time element we have
\begin{equation}
  (\nabla\cdot\bs{u}_{w},w_{j+1}^{n})=U_{w,j+\frac{3}{2}}^{n}-\displaystyle\sum_{k=0}^{\ell-1}U_{w,j+\frac{1}{2}}^{n-\frac{1}{\ell}k}
  \label{eq:wsecc}
\end{equation}
Eqn.\eqref{eq:wfin} and \eqref{eq:wsecc} will cause the accumulation and transmissibility matrix to have extra temporal bands forming in the lower triangle, making the corresponding linear system non-symmetric. The oil phase mass conservation equation is similar. Combining the equations for both phases will provide the expression for the total mass conservation equation. The two sides of Eqn.\eqref{eq:aux} is estimated as
\begin{equation}
  \begin{split}
  \begin{aligned}
    (\bs{u}_{\alpha},\bs{v})&=\displaystyle\sum_{m=1}^{q}\sum_{i=1}^{r+1}U_{\alpha,i+\frac{1}{2}}^{m}\Big(\bs{\varphi}_{i+\frac{1}{2}}^{m},\bs{\varphi}_{j+\frac{1}{2}}^{n}\Big)=\frac{x_{j+\frac{3}{2}}-x_{j-\frac{1}{2}}}{2\big|e_{j+\frac{1}{2}}^{n}\big|}U_{\alpha,j+\frac{1}{2}}^{n}
  \end{aligned}
  \end{split}
  \label{eq:auxl}
\end{equation}
\begin{equation}
  (\lambda_{\alpha}\tilde{\bs{u}}_{\alpha},\bs{v})\approx(\lambda_{\alpha}^{*}\tilde{\bs{u}}_{\alpha},\bs{v})=\frac{x_{j+\frac{3}{2}}-x_{j-\frac{1}{2}}}{2\big|e_{j+\frac{1}{2}}^{n}\big|}\lambda_{\alpha,j+\frac{1}{2}}^{*,n}\tilde{U}_{\alpha,j+\frac{1}{2}}^{n}
  \label{eq:auxr}
\end{equation}
The $\lambda_{\alpha,j+\frac{1}{2}}^{*,n}$ is the upwind mobility for stable numerical solution and is defined as
\begin{equation}
  \lambda_{\alpha,j+\frac{1}{2}}^{*,n}=\rho_{\alpha,j+\frac{1}{2}}^{n}\frac{k_{r\alpha,j+\frac{1}{2}}^{*}}{\mu_{\alpha}}=
  \begin{cases}
  \begin{aligned}
    &\frac{1}{2\mu_{\alpha}}(\rho_{\alpha,j}^{n}+\rho_{\alpha,j+1}^{n})k_{r\alpha}(S_{\alpha,j}^{n})     &\quad if\ \tilde{U}_{\alpha,j+\frac{1}{2}}^{n}>0 \\
    &\frac{1}{2\mu_{\alpha}}(\rho_{\alpha,j}^{n}+\rho_{\alpha,j+1}^{n})k_{r\alpha}(S_{\alpha,j+1}^{n}) &\quad otherwise
  \end{aligned}
  \end{cases}
  \label{eq:uw}
\end{equation}
The matrix corresponding to the above discrete formulation has sparsity pattern of three, five or seven non-zero diagonals, depending on the spatial dimension of the problem, with one extra temporal diagonal in the lower triangle. Forming such matrix in block format is referred to \cite{Singh:0818, Ganis:1014, Singh:1218}. 

\bibliographystyle{plain} 
{\small \bibliography{draft.bbl}}

\end{document}